\newtheorem{thm}{Theorem}
\newtheorem{lem}{Lemma}
\newtheorem{prop}{Proposition}
\theoremstyle{definition}
\newtheorem{rem}{Remark}
\newcommand{\ensembles}[1]{\mathbb{#1}}
	\newcommand{\Z}{\ensembles{Z}}
	\newcommand{\R}{\ensembles{R}}
\newcommand{\ind}[1]{\ensembles{I}_{\{#1\}}}
	\renewcommand{\P}{\ensembles{P}}
	\newcommand{\E}{\ensembles{E}}
\renewcommand{\Pr}[1]{\P\left(#1\right)}
\newcommand{\Es}[1]{\E\left[#1\right]}
\renewcommand{\le}{\leqslant}
\renewcommand{\leq}{\leqslant}
\renewcommand{\ge}{\geqslant}
\renewcommand{\geq}{\geqslant}
\newcommand{\ex}{\mathrm{e}}
\newcommand{\q}{\mathbf{q}}
\newcommand{\e}{\overline{\mathfrak{e}}}
\newcommand{\dgr}{d_{\mathrm{gr}}}
\newcommand{\hBall}{\overline{\mathrm{Ball}}}
        \newcommand{\map}{\mathfrak{m}}
\newcommand{\rootface}{f_{ \mathrm{r}}}
        \newcommand{\Map}{\mathfrak{M}}
\newcommand{\cv}[1][n]{\enskip\xrightarrow[#1 \to \infty]{}\enskip}
\newcommand{\cvproba}[1][n]{\enskip\xrightarrow[#1 \to \infty]{(\P)}\enskip}
\title{Infinite stable Boltzmann planar maps are subdiffusive}
\DeclareSymbolFont{extraup}{U}{zavm}{m}{n}
\DeclareMathSymbol{\vardspade}{\mathalpha}{extraup}{81}
\DeclareMathSymbol{\varheart}{\mathalpha}{extraup}{86}
\DeclareMathSymbol{\vardiamond}{\mathalpha}{extraup}{87}
\DeclareMathSymbol{\varclub}{\mathalpha}{extraup}{84}
\renewcommand*{\@fnsymbol}[1]{\ensuremath{\ifcase#1\or  \vardspade \or \vardiamond \or \varheart\or \varclub \or
   \mathsection\or \mathparagraph\or \|\or **\or \dagger\dagger
   \or \ddagger\ddagger \else\@ctrerr\fi}}
\author{
	Nicolas \textsc{Curien}\thanks{Laboratoire de Math\'{e}matiques, Univ. Paris-Sud, Universit\'{e} Paris-Saclay and IUF.\hfill  \href{mailto:nicolas.curien@gmail.com}{\texttt{nicolas.curien@gmail.com}}} 
\qquad\&\qquad
	Cyril \textsc{Marzouk}\thanks{Centre de Math\'{e}matiques Appliqu\'{e}ees, \'{E}cole polytechnique.\hfill  \href{mailto:cyril.marzouk@polytechnique.edu}{\texttt{cyril.marzouk@polytechnique.edu}}}
}
\begin{document}

\maketitle

\begin{abstract}
The infinite discrete stable Boltzmann maps are generalisations of the well-known Uniform Infinite Planar Quadrangulation in the case where large degree faces are allowed. We show that the simple random walk on these random lattices is always subdiffusive with exponent less than $1/3$. Our method is based on stationarity and geometric estimates obtained via the peeling process which are of own interest.
\end{abstract}

\section{Introduction}
Since the introduction of the Uniform Infinite Planar Triangulation (UIPT) by Angel \& Schramm, the study of random infinite planar maps has been a prolific activity. Generalisations of the UIPT with random ``stable'' faces have been introduced following the work of Le Gall \& Miermont but even basic properties of these lattices remain unknown. In particular, the behaviour of the simple random walk on infinite random planar maps has attracted a lot of attention in recent years, see the beautiful lecture notes of Nachmias on the subject~\cite{Nachmias:St_Flour}. In this paper, we revisit the subdiffusivity properties of the UIPQ (also called \emph{anomalous diffusion}) and more generally bipartite critical stable Boltzmann planar maps and we show a universal upper bound on the subdiffusivity exponent of $1/3$. 

\paragraph{Infinite discrete stable maps.} As usual in the field, we will only consider rooted (i.e. equipped with a distinguished oriented edge) bipartite (i.e. whose faces have even degree) planar maps. The second condition should only be technical. Given a sequence $ \q= (q_{k})_{ k \ge 1}$ of non-negative numbers, with $q_k > 0$ for some $k \ge 2$, we consider a random Boltzmann map $\Map$ whose law is prescribed by the following formula: for any finite bipartite planar map $\map$,
\[\Pr{\Map= \map} 
= \frac{1}{W_\q} \prod_{f\in \mathrm{Faces}(\map)} q_{ \deg(f)/2},\]
where $W_\q$ is a normalising constant; obviously, we restrict our attention to \emph{admissible} weight sequences $\q$, for which $W_\q < \infty$ so the above display is well-defined. We shall consider further \emph{critical weight sequences of type $a \in (3/2;5/2]$}, see~\cite[Chapter~V]{Curien:StFlour} and Section~\ref{sec:nu} below for details. Under the criticality assumption one can define  a random infinite bipartite map  $\Map_\infty$ with one end  as the local  limit in distribution of $  \Map$ conditioned to have a large size~\cite{Bjornberg-Stefansson:Recurrence_of_bipartite_planar_maps,Stephenson:Local_convergence_of_large_critical_multi_type_Galton_Watson_trees_and_applications_to_random_maps}. Furthermore, $ \q$ is of type $a \in (3/2;5/2)$ if the degree of the face $ \rootface$ adjacent to the right of the root edge, hereafter called the \emph{root face}, in $\Map_\infty$ satisfies $\P( \deg( \rootface) \ge n) \sim C \cdot n^{-a+3/2}$ for some $C \in (0,\infty)$ and $\deg( \rootface)$ has finite mean if $a = 5/2$. In particular, when $a \in (3/2;5/2)$ the map $\Map_\infty$ possesses large faces; also the case $a = 5/2$ includes the UIPQ.

Let $\dgr$ denote the graph distance in $\Map_\infty$. Conditional on $\Map_\infty$, let us sample a simple random walk started from the origin $\rho$ of the root edge and let $X_0, X_1,\dots$ be the vertices visited by this walk. Our main result shows that for any $a \in (\frac{3}{2}, \frac{5}{2}]$ the walk is always subdiffusive with exponent at most $1/3$:

\begin{thm}\label{thm:sous_diff_primal}
Let $\q$ be a critical weight sequence of type $a \in  (\frac{3}{2}, \frac{5}{2}]$. Under the annealed law of the map together with the random walk, we have
\[\frac{\sup_{0 \le k \le n}\dgr(X_0, X_k)}{n^{1/3} \log n} \cvproba 0.\]
\end{thm}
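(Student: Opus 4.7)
The plan is to adapt the framework initiated by Benjamini and Curien for the UIPQ and developed for stable maps by Gwynne, Miller, Nachmias and coauthors to the whole stable regime $a \in (3/2, 5/2]$. The three essential ingredients are a stationarity property of the environment seen from the walk, geometric estimates on the hulls of $\Map_\infty$ obtained from the peeling process, and a Markov-chain input that converts these into a bound on the displacement.

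For the stationarity, I would first replace the annealed measure by the reversible biased law obtained by weighting $\P$ by $\deg(\rootvertex)$, under which the sequence $(\Map_\infty, X_k)_{k \ge 0}$ rooted at the walker is stationary. This reweighting is legitimate provided $\E[\deg(\rootvertex)] < \infty$, which one can justify in the stable regime by controlling $\deg(\rootvertex)$ through the faces around the root via standard peeling arguments. Once the environment seen from the walk is stationary, bounding $\sup_{0 \le k \le n} \dgr(X_0, X_k)$ reduces, via a dyadic chaining over scales $2^j \le n$, to a one-point displacement estimate on $\dgr(X_0, X_n)$.

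The geometric heart of the proof is the following: using the peeling process one derives, with probability tending to one as $r \to \infty$, an upper bound on $|E(\Hull_r)|$ and a lower bound on $\Reff{\rootvertex}{\partial \Hull_r}$, the latter coming from the Nash--Williams inequality applied to the sequence of perimeters $(|\partial \Hull_k|)_{1 \le k \le r}$. By the commute-time identity of Chandra--Raghavan--Ruzzo--Smolensky--Tiwari,
\[\Esc{T_{\Hull_r}}{\Map_\infty} \le 4 \, |E(\Hull_r)| \, \Reff{\rootvertex}{\partial \Hull_r},\]
where $T_{\Hull_r}$ is the first exit time of $\Hull_r$. The goal is to prove that the right-hand side is at most $r^{3 + o(1)}$ uniformly in $a$, so that choosing $r \sim n^{1/3} \log n$ makes the expected exit time much larger than $n$, confining the walk to $\Hull_r$ whose graph-diameter is $O(r)$.

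The main obstacle, in my view, is the geometric step: producing bounds on $|E(\Hull_r)|$ and $\Reff{\rootvertex}{\partial \Hull_r}$ whose exponents sum to at most $3$, \emph{uniformly} over the entire stable range. The two extreme regimes behave very differently --- for $a$ close to $5/2$ the hull is dense but its perimeter fluctuates mildly, whereas for $a$ close to $3/2$ macroscopic faces appear along the peeling and create short-cuts that must be tracked --- so the universal $1/3$ exponent reflects a delicate balance between these two effects. A secondary, more technical point is the passage from the expectation bound on $T_{\Hull_r}$ to a high-probability confinement, for which a second-moment estimate or a Varopoulos--Carne-type input will be needed; the logarithmic factor in the statement presumably absorbs the sub-polynomial corrections coming from both the peeling estimates and this concentration step.
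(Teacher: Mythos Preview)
Your proposal contains a genuine logical gap: the commute-time identity is invoked in the wrong direction. The inequality $\Esc{T_{\Hull_r}}{\Map_\infty} \le 4\,|E(\Hull_r)|\,\Reff{\rho}{\partial\Hull_r}$ is an \emph{upper} bound on the exit time, so showing that the right-hand side is at most $r^{3+o(1)}$ tells you the walk may exit $\Hull_r$ quickly --- the opposite of what you need. To confine the walk you require a \emph{lower} bound on $T_{\Hull_r}$, and the commute-time identity does not give one: in $\E_\rho[\tau_{\partial\Hull_r}]+\E_{\partial\Hull_r}[\tau_\rho]=2|E|\,\Reff{\rho}{\partial\Hull_r}$ the second term typically dominates, so $\E_\rho[\tau_{\partial\Hull_r}]$ can be much smaller than $|E|\,\Reff{\rho}{\partial\Hull_r}$. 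Even the crude lower bound $\E_\rho[\tau_{\partial\Hull_r}]\ge \deg(\rho)\,\Reff{\rho}{\partial\Hull_r}$ would then force you to prove $\Reff{\rho}{\partial\Hull_r}\gtrsim r^{3}$, which is false.

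There is a second, geometric obstruction. Nash--Williams applied to the sequence of hull boundaries yields $\Reff{\rho}{\partial\Hull_r}\ge\sum_{k\le r}|\partial\Hull_k|^{-1}$, but in $\Map_\infty$ one has $|\partial\Hull_k|$ of order $k^{2}$ for every $a\in(3/2,5/2]$ (this is the perimeter scaling coming from the peeling estimates you cite), so the sum converges and the bound is $O(1)$. In the dense phase $a<2$ one can do better by using cut edges as cutsets, and indeed the paper exploits exactly this to get $\Reff{\rho}{\partial\Hull_{KR}}\gtrsim R^{4-2a}$; but for $a\ge 2$ no such bottlenecks exist and your resistance input collapses. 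Obtaining resistance lower bounds strong enough to drive the argument in the dilute phase is essentially the hard open problem (this is what Gwynne--Miller and Gwynne--Hutchcroft achieve for the UIPT via LQG), not something one can extract from Nash--Williams and peeling alone.

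The paper's route is quite different and sidesteps resistance entirely. It proves a general ``flashing'' lemma: if one can exhibit a \emph{stationary} subset $\mathcal{G}_R$ of vertices such that (i) the walk must traverse at least $\beta_R$ vertices of $\mathcal{G}_R$ before exiting the ball of radius $R$, and (ii) $\P(X_n\in\mathcal{G}_R)\le\gamma_R^{-1}$ for all $n$, then Varopoulos--Carne applied to the walk flashed on $\mathcal{G}_R$ confines $X$ to $\mathcal{B}_R$ for time $\approx\gamma_R\beta_R^2$. The set $\mathcal{G}_R$ is built from (emulated) horodistances --- distances to a large boundary in a finite approximation $\vec{\Map}_1^{(\ell)}$ of $\Map_\infty$ --- so that it does not depend on the root and is therefore stationary. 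One then shows $\beta_R\approx R$ (geometric separation via the horodistance labelling) and $\gamma_R\approx R$ (a density estimate obtained from the peeling process with a target), which combine to $R^3$. No effective-resistance estimate enters the argument.
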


One can also consider a walk $X_0^\dag, X_1^\dag, \dots$ on the dual map $\Map_\infty^\dag$ (which walks on the faces of $\Map_{\infty}$). We proved in~\cite[Corollary~3]{Curien-Marzouk:Markovian_explorations_of_random_planar_maps_are_roundish} that for $a \in (2, \frac{5}{2}]$, the so-called ``dilute regime'', we have
\begin{equation}\label{eq:sous_diff_dual}
\lim_{R \to \infty} \Pr{X_i^\dag \in \hBall(\Map_\infty^\dag, R) \text{ for all } i \le R^{\frac{a-1}{a-2}} \log^{-1} R} = 1,
\end{equation}
where $\hBall(\Map_\infty^\dag, R)$ is the hull of the ball of radius $R$ in $\Map^{\dagger}$ obtained by keeping all faces at dual graph distance at most $R$ from the root face as well as the finite regions they enclose (recall that $\Map_\infty$ is one-ended). Note that $\frac{a-2}{a-1} \le \frac{1}{3}$ so if one could replace the hull of balls by the balls ---~which we believe is true when $a > 2$~--- this would imply a subdiffusive behaviour for the walk on the dual map in this regime.  Our proof of Theorem~\ref{thm:sous_diff_primal} can probably be adapted to this walk and would yield a similar result to~\eqref{eq:sous_diff_dual} (still with hulls of balls) but with a smaller exponent $\frac{2a-7/2}{a-2}$.

In the case $a < 2$, the so-called ``dense regime'', the walk on the dual displaces very slowly:
let us denote by $\dgr^\dag$ the graph distance on $\Map_\infty^\dag$, then

\begin{thm}\label{thm:sous_diff_dual_dense}
Let $\q$ be a critical weight sequence of type $a \in  (\frac{3}{2}, 2)$. There exists a constant $\delta > 0$ such that under the annealed law of the map together with the random walk, we have
\[\Pr{\sup_{0 \le k \le n}\dgr^\dagger(X_0^\dagger, X^\dagger_k) \le \delta \log^2 n} \cv 1.\]
\end{thm}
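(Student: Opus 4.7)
We follow the same general scheme as for Theorem~\ref{thm:sous_diff_primal} but applied to the walk on the dual map, exploiting the fact that in the dense regime the peeling dynamics produces a very flat dual geometry.

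\textbf{Step 1: peeling along the dual walk.} We adapt the construction of~\cite{Curien-Marzouk:Markovian_explorations_of_random_planar_maps_are_roundish} to the walk $(X_k^\dagger)$: at each step the walker selects a uniform edge of its current face; if the face on the other side of this edge has already been revealed, the walker simply jumps to it, otherwise we trigger one peeling step and the walker moves to the newly revealed face. After $n$ walk steps this produces an explored submap $\mathcal{R}_n \subseteq \Map_\infty$ containing all visited faces and consisting of at most $n + 1$ peeled faces; in particular
\[\dgr^\dagger(X_0^\dagger, X_k^\dagger) \le \mathrm{diam}^\dagger(\mathcal{R}_n) \qquad \text{for every } k \le n,\]
where $\mathrm{diam}^\dagger$ denotes the diameter measured with the dual graph distance $\dgr^\dagger$.

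\textbf{Step 2: bound on the dual diameter after $n$ peeling steps.} We show that for some constant $\delta = \delta(a) > 0$,
\[\P\bigl( \mathrm{diam}^\dagger(\mathcal{R}_n) \le \delta \log^2 n \bigr) \cv 1.\]
The key input is the explicit description of the peeling transitions for critical stable Boltzmann maps~\cite[Chapter~V]{Curien:StFlour}. In the dense regime $a \in (\frac{3}{2}, 2)$, the revealed face degrees are heavy-tailed with infinite mean, so that large peeled faces ``swallow'' most of the current boundary and thereby create a hierarchy of nested regions. Consequently, crossing one additional dual layer typically requires a burst of peeling steps whose size grows rapidly with the current depth; summing these bursts yields a total number of peeling steps exponential in the square root of the dual distance reached, which is equivalent to the stated $\log^2 n$ bound.

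Combining Steps~1 and~2 then immediately gives the theorem.

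\paragraph{Main obstacle.} The delicate point is the sharp $\log^2 n$ estimate in Step~2. While the heuristic is clear, making it rigorous requires controlling jointly the perimeter and the dual depth of the explored region under peeling, and in particular the hitting times at which the dual depth increases. We expect this to follow from hitting-time estimates for the stable random walk (of index $a - 1$) driving the perimeter process, combined with martingale and Borel--Cantelli arguments; the square-root exponent should emerge from the typical size of the perimeter fluctuations that the walk has to absorb along a dual geodesic before progressing to the next dual layer.
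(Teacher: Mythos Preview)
Your Step~2 is where the whole proof lives, and as you yourself flag, it is not established---only sketched. The difficulty is more serious than a missing technical lemma. The heuristic you give (bursts of peeling steps needed to cross each dual layer, with total cost ``exponential in the square root of the dual depth'') is borrowed from the behaviour of the \emph{layer} peeling $\mathcal{A}_{\mathrm{dual}}$, where one deliberately exhausts every edge adjacent to the current shell before advancing. But your peeling is driven by the simple random walk, which has no reason to complete a layer before moving outward; the correspondence between the perimeter process and the dual depth reached is algorithm-dependent, so the hitting-time estimates for the $(a-1)$-stable perimeter process do not transfer to a dual-diameter bound for $\mathcal{R}_n$ without a genuinely new argument. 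Moreover, for the layer peeling the cost to cross layers is exponential in the depth (not its square root), cf.~\cite{Budd-Curien:Geometry_of_infinite_planar_maps_with_high_degrees}, so it is unclear where the extra square in $\log^2 n$ would come from in your framework. (A minor side issue: in the filled-in version of the peeling, $\mathcal{R}_n$ can contain many more than $n+1$ faces, since each $\mathsf{G}$ event swallows an entire finite region.)

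The paper's route is quite different and bypasses the diameter of any walk-driven region. It combines two ingredients: (i) by stationarity and the tail~\eqref{eq:rootdegree}, during the first $\ex^{R}$ steps the walk avoids every face of degree $\ge m \asymp \ex^{cR}$ with high probability; (ii) Lemma~\ref{lem:cut_face_peeling_layer}---a statement about the map, proved via the \emph{layer} peeling and the spatial Markov property---says that from any face, a cut face of degree $\ge m$ lies within dual distance $kR$ except on an event of probability $\eta^{k}$. Taking $k \asymp R$ so that $\eta^{k}$ beats the union bound over the $\ex^{R}$ positions of the walk yields a barrier at dual distance $\asymp R^{2}$ around every visited face; this is exactly where the $\log^{2} n$ enters. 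In short, the square comes from a union bound over the trajectory applied to a one-point cut-face estimate, not from a peeling-time analysis. Your outline does not capture this mechanism, and without it Step~2 remains open.
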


\begin{rem}\label{rem:attraction_stable}
One can consider critical weight sequences such that the law of the degree of the root face belongs to the domain of attraction of an $(a-3/2)$-stable law in the broad sense, allowing corrections with slowly varying functions. 
All the results of this paper generalise verbatim, except that the polylogarithms are replaced by other slowly varying functions. Although this only requires mild modifications in the proofs, appealing to basic properties of such functions, we refrained to include it in order to lighten the exposition. 
In another direction, we wonder whether Theorem~\ref{thm:sous_diff_primal} can be extended to all \emph{critical} infinite Boltzmann maps of the plane, without stable tail behaviour for the weight sequence.
\end{rem}

\paragraph{Known results.} Subdiffusive behaviour, or anomalous diffusion for random walks has been popularised by De~Gennes~\cite{DG76} under the name ``the ant in the labyrinth''. This triggered a lot of work around random walk on (critical) percolation clusters and the Alexander-Orbach conjecture, see~\cite{Kumagai:Random_walks_on_disordered_media_and_their_scaling_limits} and the references therein. In the context of random planar maps, the first result on subdiffusivity of the simple random walk on random planar maps was obtained in~\cite{Benjamini-Curien:Simple_random_walk_on_the_uniform_infinite_planar_quadrangulation_subdiffusivity_via_pioneer_points} which considers the UIPQ (which is $\Map_\infty$ when $q_k = \frac{1}{12} \ind{k=2}$, which falls in the case $a = \frac{5}{2}$): Relying on the \emph{peeling process} to study the \emph{pioneer points} of the walk, they show an upper bound of $1/3$ on the subdiffusive exponent, i.e. that $ \sup_{k \le n}\dgr(X_{0},X_{k}) \le n^{1/3+ o(1)}$ with high probability.

Very recently, we adapted this approach to the present context of infinite stable maps~\cite[Corollary~2]{Curien-Marzouk:Markovian_explorations_of_random_planar_maps_are_roundish} and showed an upper bound of $ \frac{1}{2a-2}$ for the subdiffusive exponent in the case of  weight sequence of type $ a \in (2, \frac{5}{2}]$. This should be compared with the work of Lee~\cite[Theorem~1.9]{Lee:Conformal_growth_rates_and_spectral_geometry_on_distributional_limits_of_graphs} which considers more generally unimodular random planar graphs with $d > 3$ volume growth exponent; informally\footnote{Actually, as pointed out in~\cite{Lee:Conformal_growth_rates_and_spectral_geometry_on_distributional_limits_of_graphs}, Theorem~1.9 there does not apply directly to random maps and one uses the more involved Theorem~1.15.}, since $d = 2a-1$ here~\cite[Proposition~2]{Curien-Marzouk:Markovian_explorations_of_random_planar_maps_are_roundish}, Theorem~1.9 in~\cite{Lee:Conformal_growth_rates_and_spectral_geometry_on_distributional_limits_of_graphs} reads $\E[\dgr(X_0, X_n)] \le n^{\frac{1}{2a-2} + o(1)}$ for $a \in (2, \frac{5}{2}]$.
The bound~\eqref{eq:sous_diff_dual} on the dual map is also obtained via the pioneer points approach.

Let us mention that the bound on the subdiffusivity exponent from the pioneer points one can be slightly sharpened, see~\cite{Curien-Marzouk:How_fast_planar_maps_get_swallowed_by_a_peeling_process} for Boltzmann maps with bounded face degrees, but the argument can be generalised with more effort. Nonetheless this improvement does not yield the exact exponent; in the case of the (type II) UIPT, using a Liouville Quantum Gravity approach, Gwynne \& Miller~\cite{Gwynne-Miller:Random_walk_on_random_planar_maps_spectral_dimension_resistance_and_displacement} and then Gwynne \& Hutchcroft~\cite{Gwynne-Hutchcroft:Anomalous_diffusion_of_random_walk_on_random_planar_maps} obtained the exact (lower and upper bound respectively) exponent $1/4$ which was conjectured in~\cite{Benjamini-Curien:Simple_random_walk_on_the_uniform_infinite_planar_quadrangulation_subdiffusivity_via_pioneer_points}.

\begin{figure}[!ht] \centering
\includegraphics[width=.8\linewidth]{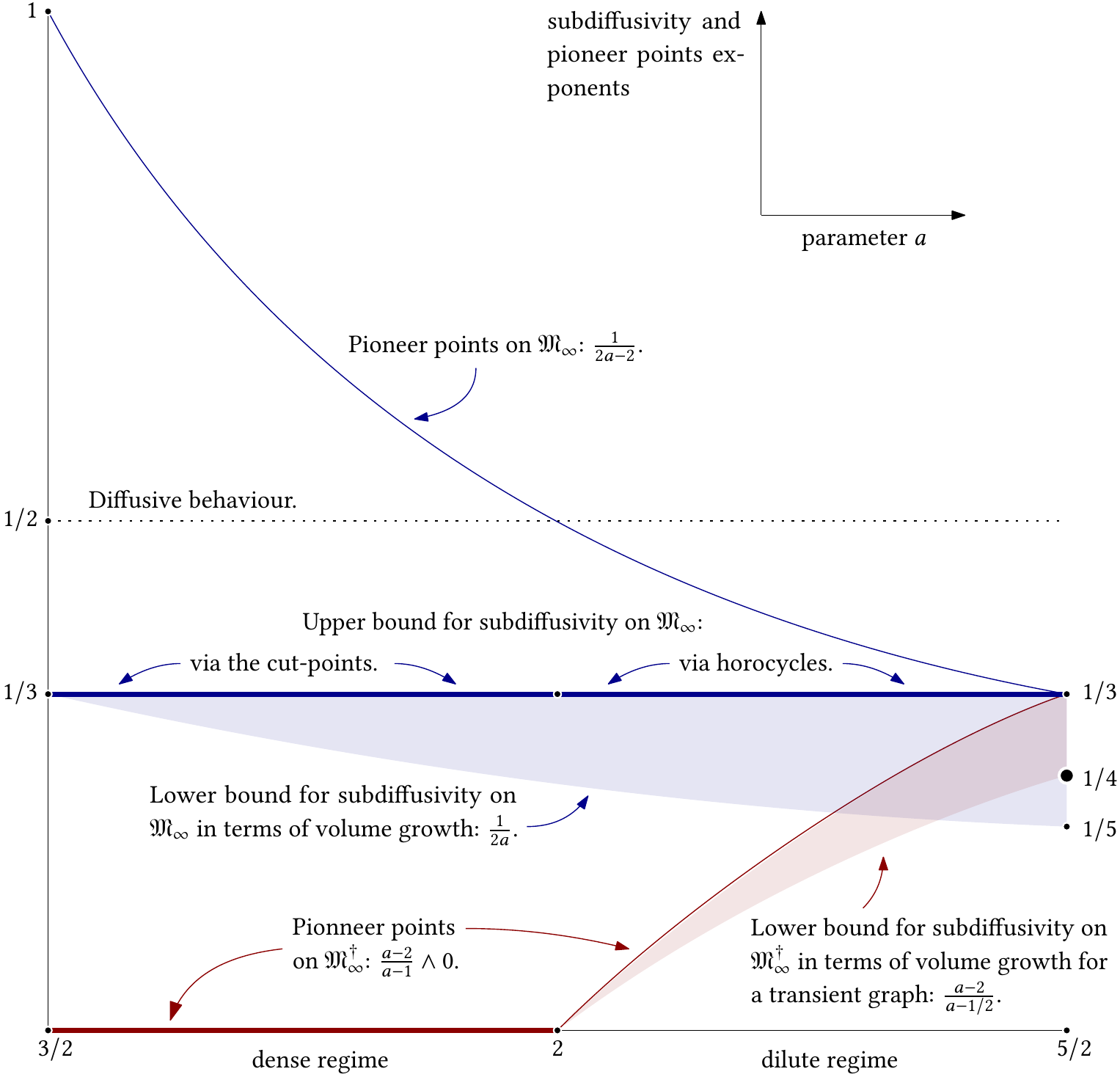}
\caption{A schematic representation of the bounds on the subdiffusivity exponents for the random walk on $\Map_\infty$ in blue (top) and the one on $\Map_\infty^\dagger$ in red (bottom); 
the two thick horizontal lines are the main results of this paper.}
\label{fig:diagramme}
\end{figure}

\paragraph{What is the true subdiffusivity exponent?} 
Let us recapitulate on Figure~\ref{fig:diagramme} the bounds we expect for the subdiffusivity exponent $s_{a}$ relying on properties of the random lattices $ \Map_{\infty}$, some of them being still speculative.  Recall that the volume growth of balls of radius $r$ in $\Map_\infty$  is of order $r^{{2a-1}}$ for $a \in (3/2;5/2]$, see~\cite{Curien-Marzouk:Markovian_explorations_of_random_planar_maps_are_roundish} and of order $ r^{ \frac{a- 1/2}{a-2}}$ in the dual map $ \Map_{\infty}^{\dagger}$ in the dilute phase $a \in (2; 5/2]$, see~\cite{Budd-Curien:Geometry_of_infinite_planar_maps_with_high_degrees}.

Theorem~\ref{thm:sous_diff_primal} provides an upper bound for $s_a \le 1/3$ valid on the primal lattice, in all regimes (thick blue horizontal line on Figure~\ref{fig:diagramme}). On the other hand, a general result in terms of volume growth suggests (see~\cite[Remark p527]{Benjamini-Curien:Simple_random_walk_on_the_uniform_infinite_planar_quadrangulation_subdiffusivity_via_pioneer_points} with the obstacles mentioned in~\cite{Lee:Conformal_growth_rates_and_spectral_geometry_on_distributional_limits_of_graphs}) that $s_a \ge 1/(2a)$. When $a=5/2$ we expect $s_{5/2}= 1/4$ is a broad generality (see Gwynne--Miller and Gwynne--Hutchcroft for the UIPT).

In the case of the dual map, in the dilute regime $a > 2$, recall that we believe that~\eqref{eq:sous_diff_dual} remains valid with balls instead of their hull (dotted thick red line); for a lower bound, if these lattices were transient (which is still an open problem) then the preceding lower bound in terms of the volume growth could be sharpened to $\frac{a-2}{a-1/2}$. This lower bound should be exact at $a=5/2$.

\paragraph{Organisation} 
In Section~\ref{sec:strategie} we first present the global strategy of the proof Theorem~\ref{thm:sous_diff_primal} based on stationarity of the map seen from the random walk and flashing the walk on good subset; then in Section~\ref{sec:peeling} we recall the peeling of planar maps which will be a crucial tool. In Section~\ref{sec:dense} we focus on the dense regime $a < 2$: using the cut points we first prove Theorem~\ref{thm:sous_diff_dual_dense} as well as Theorem~\ref{thm:sous_diff_primal} in this range of values of $a$. Finally Section~\ref{sec:dilue} is devoted to the proof of Theorem~\ref{thm:sous_diff_primal} for all values of $a$.

\paragraph{Acknowledgments} We acknowledge support from the Fondation Math\'{e}matique Jacques Hadamard, the grants \texttt{ERC-2016-STG 716083} ``CombiTop'' and \texttt{ERC 740943} ``GeoBrown'', as well as the grant \texttt{ANR-14-CE25-0014} ``ANR GRAAL''.

\section{Strategy of the proof}
\label{sec:strategie}

\subsection{Subdiffusivity from diffusivity on a sparse subgraph}
The proof of Theorem~\ref{thm:sous_diff_primal} relies on a simple idea (formalised in the following general result) which gives an upper bound on the displacement of a random walk on a random graph by ``flashing it'' on a certain subgraph. Let us denote by $\mathcal{G}$ a random connected (multi-)graph, either finite or infinite, but locally finite in this case, with a distinguished origin vertex $\rho$, and consider a simple random walk $(X_n)_{n \ge 0}$ on $\mathcal{G}$ started at $X_0 = \rho$. Denote by $ \mathcal{B}_{R}$ the ball of radius $R$ around the origin $\rho$ (for the graph distance) in $ \mathcal{G}$.

\begin{lem}\label{lem:conditions_generales_sous_diff}
Let $(\beta_R)_{R \ge 1}$ and $(\gamma_R)_{R \ge 1}$ be two positive sequences and $d \ge 1$. Suppose that for any integer $R \ge 1$, we are given a subset of vertices $\mathcal{G}_R$ of the graph $\mathcal{G}$ such that:
\begin{enumerate}
\item\label{H_degrees} \textsc{(Polynomial growth)} with high probability as $R \to \infty$, the ball $ \mathcal{B}_{R+1}$ has less than $R^{d}$ vertices;

\item\label{H_geom} \textsc{(Geometric Separation)} 
%With high probability as $R \to \infty$, any path starting from $\rho$ needs to go through at least $\beta_R$ different vertices of $\mathcal{G}_R$ to exit $ \mathcal{B}_{R}$;
With high probability as $R \to \infty$, a simple random walk on $\mathcal{G}$ started from $\rho$ goes through at least $\beta_R$ different vertices of $\mathcal{G}_R$ before exiting $ \mathcal{B}_{R}$;

\item\label{H_densite} \textsc{(Density)} For every $n \ge 1$, we have that $\P(X_n \in \mathcal{G}_R) \le \gamma_R^{-1}$.
\end{enumerate}
Then with high probability as $R \to \infty$, the random walker $X_i$ belongs to $\mathcal{B}_R$ for every $i \le \gamma_R \beta_R^2 \log^{-7/4} R$.
\end{lem}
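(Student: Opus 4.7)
The plan is to lower bound the exit time $\tau_R := \inf\{k \ge 0 : X_k \notin \mathcal{B}_R\}$ of the walk from the ball, by showing that $\P(\tau_R > T_R) \to 1$ with $T_R := \lfloor \gamma_R \beta_R^2 \log^{-7/4} R \rfloor$. I would work on the intersection of the good events provided by (i) and (ii): on this event the total size (and hence maximum degree) inside $\mathcal{B}_{R+1}$ is at most $R^d$, and if $\tau_R \le T_R$ then the walk has visited at least $\beta_R$ different vertices of $\mathcal{G}_R$ before time $T_R$. The aim is then to reach a contradiction.

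The heart of the argument is to sandwich the number of visits
\[N_R := \#\{1 \le k \le T_R : X_k \in \mathcal{G}_R\}\]
between an upper bound coming from (iii) and a lower bound coming from (ii). The upper bound is the easy direction: condition (iii) gives $\E[N_R] \le T_R/\gamma_R = \beta_R^2 \log^{-7/4} R$, and Markov's inequality promotes this to $N_R \le \beta_R^2 \log^{-1}R$ with probability $1-o(1)$ (soaking up some log factors now to leave room for later).

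The delicate direction is a quadratic lower bound $N_R \ge \beta_R^2 \log^{-c} R$ deduced from the fact that at least $\beta_R$ distinct vertices of $\mathcal{G}_R$ have been visited. This is where the full strength of the assumptions is needed, because visiting many distinct vertices only gives a trivial \emph{linear} lower bound $N_R \ge \beta_R$. Conditionally on $\mathcal{G}$, the simple random walk $X$ is reversible; so is its trace $\widehat X$ on $\mathcal{G}_R$, with stationary mass $\hat\pi(v) \propto \deg(v)$ restricted to $\mathcal{G}_R$. Condition (iii) translates (after time-averaging) to $\hat\pi(\mathcal{G}_R) \lesssim 1/\gamma_R$ at the quenched level, so Cauchy--Schwarz on any collection $v_1, \dots, v_{\beta_R}$ of visited vertices of $\mathcal{G}_R$ gives
\[\sum_{j=1}^{\beta_R} \frac{1}{\hat\pi(v_j)} \;\ge\; \frac{\beta_R^{\,2}}{\sum_j \hat\pi(v_j)} \;\ge\; \beta_R^{\,2}\, \gamma_R .\]
By Kac's lemma the left-hand side equals the sum of mean return times of $\widehat X$ to the $v_j$'s. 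Using reversibility and Green's-function estimates (and the polynomial degree bound from (i) to control pathological terms) this sum of return times would be translated into a lower bound on $N_R$ of the desired form $\beta_R^2 \log^{-c} R$. Together, these two bounds on $N_R$ would collide on $\{\tau_R \le T_R\}$ (since $\log^{-1} \ll \log^{-c}$ for appropriate $c < 1$), forcing $\tau_R > T_R$ with high probability.

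The main obstacle is precisely this last step: converting the Cauchy--Schwarz inequality on $1/\hat\pi(v_j)$ into a quantitative lower bound on $N_R$, while keeping track of polylogarithmic losses from each Markov/Chebyshev-type concentration used along the way. Without reversibility and condition (iii) together, the bound is genuinely impossible (cf.\ expanders, where $\beta_R$ distinct vertices can be visited in linear time); the correct argument must carefully exchange the role of "distinct" and "total" visits at the right scale. I expect that the $\log^{7/4} R$ factor in the statement is the sum of roughly three polylogarithmic slacks---one from the Markov bound on $N_R$, one from concentration of local times, and one from the translation between return times and total visits---and should not be thought of as sharp.
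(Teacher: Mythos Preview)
Your upper bound on $N_R$ via Markov's inequality and hypothesis~(iii) is exactly what the paper does, and your overall architecture --- squeeze $N_R$ between two incompatible bounds --- is the right shape. The gap is entirely in the lower bound, and the Kac/Cauchy--Schwarz route you sketch will not close it.

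The issue is that $\sum_j 1/\hat\pi(v_j)$ has no usable relation to $N_R$. On an infinite graph there is no stationary probability measure, so Kac's formula is unavailable; and even formally, the ``sum of mean return times'' is a statement about the \emph{stationary} chain revisiting each $v_j$, not about a walk started at $\rho$ passing once through a collection of points. Nothing in your inequality forces the walk to linger: a reversible nearest-neighbour chain can perfectly well visit $\beta_R$ distinct vertices in $\beta_R$ steps (think of a path or a tree), so ``many distinct vertices'' cannot be upgraded to ``$\beta_R^2$ visits'' by stationary-measure considerations alone. Your own remark about expanders already identifies the obstruction; Cauchy--Schwarz on $1/\hat\pi$ does not use any geometry of $\mathcal{G}_R$ and so cannot overcome it.

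What the paper does instead is give $\mathcal{G}_R$ the induced graph structure (two vertices of $\mathcal{G}_R$ are joined if some path in $\mathcal{G}$ connects them without meeting $\mathcal{G}_R$ in between), so that the trace $\widehat X$ becomes a \emph{nearest-neighbour} reversible chain on this graph, with reversing measure $\deg_{\mathcal{G}}(\cdot)$. Hypothesis~(ii) is then read as: to exit $\mathcal{B}_R$, the trace must travel graph distance at least $\beta_R$ in this induced graph. At that point the Varopoulos--Carne inequality
\[
p_n^{\widehat X}(y,y') \le 2\sqrt{\deg_{\mathcal{G}}(y')/\deg_{\mathcal{G}}(y)}\;\exp\bigl(-d_{\mathcal{G}_R}(y,y')^2/2n\bigr)
\]
gives directly that $\widehat X$ cannot cover distance $\beta_R$ in fewer than $\beta_R^2\log^{-3/2}R$ steps, once one sums over the polynomially many possible endpoints in $\mathcal{B}_{R+1}$ (this is where~(i) enters) and bounds degrees crudely by the volume $R^d$. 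The three polylogarithmic losses you anticipated are: one $\log^{3/2}$ to beat the polynomial prefactor in the union bound over endpoints, and one $\log^{1/4}$ in the Markov bound on $N_R$, giving the $\log^{7/4}$ in the statement. So the missing ingredient is not a local-time or Green's-function estimate but the universal Gaussian heat-kernel bound for reversible chains; once you invoke Varopoulos--Carne, the proof is short.
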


The idea of flashing a random walk on a certain subset to deduce subdiffusivity was already used by Kesten~\cite{Kes86} (see also~\cite{Damron-Hanson-Sosoe:Subdiffusivity_of_random_walk_on_the_2D_invasion_percolation_cluster}) where he considered the backbone of the critical infinite incipient percolation cluster on $ \Z^{2}$.

\begin{proof} We shall consider the walk \emph{flashed} on $\mathcal{G}_R$, i.e. the sequence $(Y_i)_{i \ge 1}$ of successive vertices of $\mathcal{G}_R$ visited by the walk. The subset $\mathcal{G}_R$ can be equipped with a connected graph structure induced by $\mathcal{G}$ as follows: two vertices of $\mathcal{G}_R$ are linked by an edge if there exists a path in $\mathcal{G}$ going from one to the other without visiting any other vertex of $\mathcal{G}_R$. By decomposing the probability that $Y$ moves from a vertex to another over all possible corresponding paths for $X$, it is straightforward to show that $Y$ is a (possibly stopped) reversible Markov chain with respect to $\deg_{\mathcal{G}}(\cdot)$, the degree of the vertices in the \emph{original graph} $\mathcal{G}$. The  Varopoulos--Carne bound (see e.g. Lyons \& Peres~\cite[Theorem~13.4]{Lyons-Peres:Probability_on_trees_and_network}) then shows that for $n \ge 1$ and two vertices $y$ and $y'$ in $\mathcal{G}_R$ at distance $d$ \emph{in $\mathcal{G}_R$}, the probability that the flashed walk $Y$ goes from $y$ to $y'$ in exactly $n$ steps is at most
\[2 \left(\frac{\deg_{\mathcal{G}}(y)}{\deg_{\mathcal{G}}(y')}\right)^{1/2} \exp\left(-\frac{d^2}{2n}\right).\]
Recall from Assumption~\ref{H_geom} that the random walker $X$ needs to move for a distance at least $\beta_{R}$ within the graph $ \mathcal{G}_{R}$ in order to escape from $ \mathcal{B}_{R}$. Let us denote by $\sigma_R$ the first instant at which the walk $X$ has made $\beta_R^2 \log^{-3/2} R$ steps in the subset $\mathcal{G}_R$. Summing over all possible starting and ending points inside $ \mathcal{G}_{R} \cap \mathcal{B}_R$ and crudely bounding the degrees by the volume,  we deduce that the probability to move distance $\beta_{R}$ across $ \mathcal{G}_{R}$ before time $\sigma_{R}$ is bounded above by 
\[2 R^{d/2 + d+d}  \exp\left(- \frac{1}{2} \log^{3/2} R\right)  \cv[R] 0,\]
on the event where~\ref{H_degrees} and~\ref{H_geom} are satisfied. On the other hand, by~\ref{H_densite} and Markov's inequality 
\begin{align*}
\Pr{\sigma_R \le \gamma_R \beta_R^2 \log^{-7/4} R} 
&= \Pr{\sum_{k \le \gamma_R \beta_R^2 \log^{-7/4} R} \ind{X_k \in \mathcal{G}_R} \ge \beta_R^2 \log^{-3/2} R}
\\
&\le \beta_R^{-2} \log^{3/2} R \sum_{k \le \gamma_R \beta_R^2 \log^{-7/4} R} \Pr{X_k \in \mathcal{G}_R}
\\
&\le C \log^{-1/4} R \cv[R]0.
\end{align*}
We deduce that with high probability as $R \to \infty$, the random walk $X$ was not able to move across distance $\beta_{R}$ in $ \mathcal{G}_{R}$ within the first $\gamma_R \beta_R^2 \log^{-7/4} R$ steps. A fortiori it could not have escaped $ \mathcal{B}_{R}$ by~\ref{H_geom}.
\end{proof}

The proof of Theorem~\ref{thm:sous_diff_primal} reduces to finding such $\mathcal{G}_R$ which are big enough so $\beta_R$ is large, but not too big so $\gamma_R$ is also large. Indeed, a caricature consists in taking $\mathcal{G}_R$ to be the entire ball of radius $R$, then $\beta_R = R$ but $\gamma_R = 1$ which shows that the walk is at most diffusive; another extreme consists in taking $\mathcal{G}_R$ to be the union of the boundaries of the balls of radius $R$ and $R/2$ which lie at distance $1$ in $\mathcal{G}_R$, but now $\gamma_R$ is quite large and this again would yield a diffusive upper bound in our case.

\subsection{\texorpdfstring{Heuristic for $\mathcal{G}_{R}$}{Heuristic for G\_R}}
\label{sec:heuristic}

\begin{figure}[!ht]\centering
\includegraphics[width=.85\linewidth]{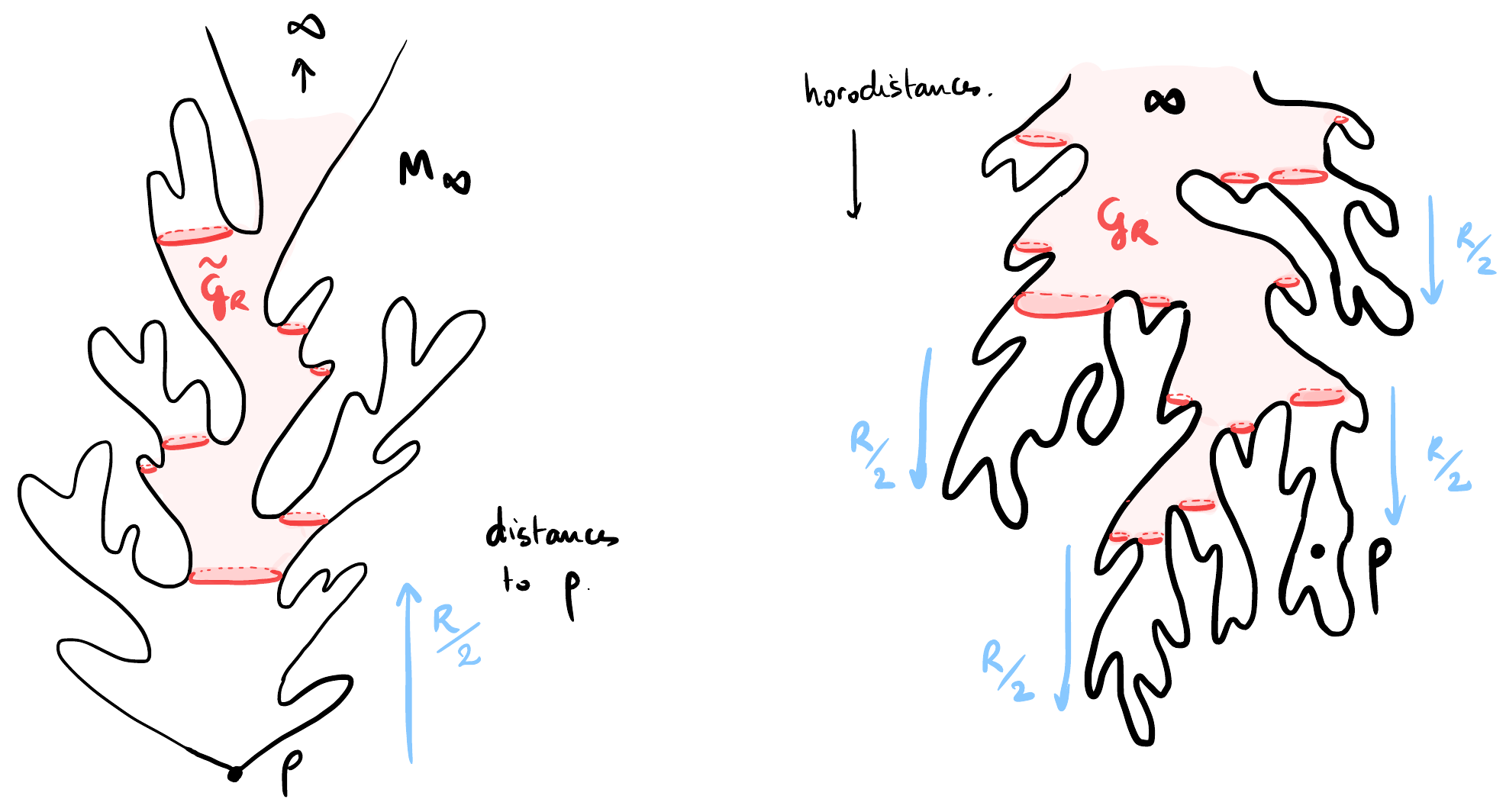}
\caption{A natural try for $ \mathcal{G}_{R}$ on the left (not stationary) and its stationary version using horodistances. The problem is that horodistances are not proved to exist in $\Map_{\infty}$ in general...}
\label{fig:construction1}
\end{figure}
 
Let us give a heuristic of the proof of our main result. A natural guess for $ \mathcal{G}_{R}$ which is thinner than the entire ball but which still necessitate about $R$ (flashed) steps to traverse is the set of vertices  which separates $ \mathcal{B}_{R/2}$ from infinity (see Figure~\ref{fig:construction1} left). The main drawback is that estimating $\P(X_{n} \in \mathcal{G}_{R})$ is a difficult task. This is due to the fact that this set strongly depends on $\rho$. We shall rather construct our random subsets $ \mathcal{G}_{R}$ in a \emph{stationary} way, i.e. such that $\P(X_n \in \mathcal{G}_R) = \P(\rho \in \mathcal{G}_R)$ for all $n$. Since the random graph $\Map_{\infty}$ is itself a stationary random graph~\cite[Proposition~7.9]{Curien:StFlour}, it suffices to construct $ \mathcal{G}_{R}$ in a way that does not depend on the origin $\rho$ of $\Map_{\infty}$. To do this, we shall  use  ``distances from infinity'' or horodistances rather than distances to $\rho$. This horodistance is defined by
\begin{equation}\label{eq:horo}
\ell(u) = \lim_{z \to \infty} \dgr(z, u) - \dgr(z, \rho) \in \Z,
\end{equation}
where $z \to \infty$ means that $z$ escapes from any finite set in the map. We then \emph{define} the set $\mathcal{G}_R$ as those vertices $u$ such that at least $R^{2a-1}$ (the typical volume of a ball of radius $R$) different vertices ``under $u$'', i.e. may be joined to $u$ by a path visiting only vertices with horodistance non-greater than $\ell(u)$ (see Figure~\ref{fig:construction1} right). Since the definition of $ \mathcal{G}_{R}$ does not depend on the origin of the map, it is stationary in the sense that $\P( X_{n} \in \mathcal{G}_{R})$ is constant in $n$. In the notation of Lemma~\ref{lem:conditions_generales_sous_diff} we expect both $\beta_{R} \approx R$ and $\gamma_{R} \approx R$ (see the related Proposition~\ref{prop:proba_arete_good_cas_dilue}) which yields an upper bound of $1/3$ on the subdiffusivity exponent by Lemma~\ref{lem:conditions_generales_sous_diff}. Actually since horodistances~\eqref{eq:horo} are not yet proved to exist in general Boltzmann maps (see~\cite{Curien-Menard-Miermont:A_view_from_infinity_of_the_uniform_infinite_planar_quadrangulation, Curien-Menard:The_skeleton_of_the_UIPT_seen_from_infinity} for the case of the UIPQ and UIPT) we shall use a trick and emulate them on finite maps: we replace horodistances by the distances to an extra large boundary, far away from the root edge, see Section~\ref{sec:dilue}. 
\bigskip

The geometry of $\Map_{\infty}$ undergoes a  phase transition at $a = 2$, and the dense phase $a < 2$ is very different from the dilute phase $a > 2$ (see e.g.~\cite{Budd-Curien:Geometry_of_infinite_planar_maps_with_high_degrees}); in particular, when $a < 2$, the map possesses \emph{cut edges}: large faces touch themselves and disconnect the origin from infinity. In this phase $a < 2$ (Section~\ref{sec:primal_dense}), we actually give another version of a stationary set $ \mathcal{G}_{R}$ as the set of all (vertices adjacent to) edges which separate from infinity a part of the map with volume at least $R^{2a-1}$, see Figure~\ref{fig:cut_points}.
We shall control the number of such cut edges in Proposition~\ref{prop:cut_edges} and this will imply with the preceding notation that $\beta_R \approx R^{4-2a}$. We also evaluate the density of $ \mathcal{G}_{R}$ in Proposition~\ref{prop:sommet_racine_cut_edge} yielding $\gamma_R \approx R^{4a-5}$. This gives $\gamma_R \beta_R^2 \approx R^3$ and proves the same upper bound of $1/3$ on the subdiffusivity exponent. Although this yields the same bound on the subdiffusivity exponent we included this derivation because of the simpler nature of the argument and since the geometric estimates involved are interesting in their own; for example, we recover the recurrence of the walk in this regime, with an explicit lower bound on the effective resistance between the origin and the boundary of the ball of radius $R$, see Remark~\ref{rem:recurrence}.

\begin{figure}[!ht]\centering
\includegraphics[width=.55\linewidth]{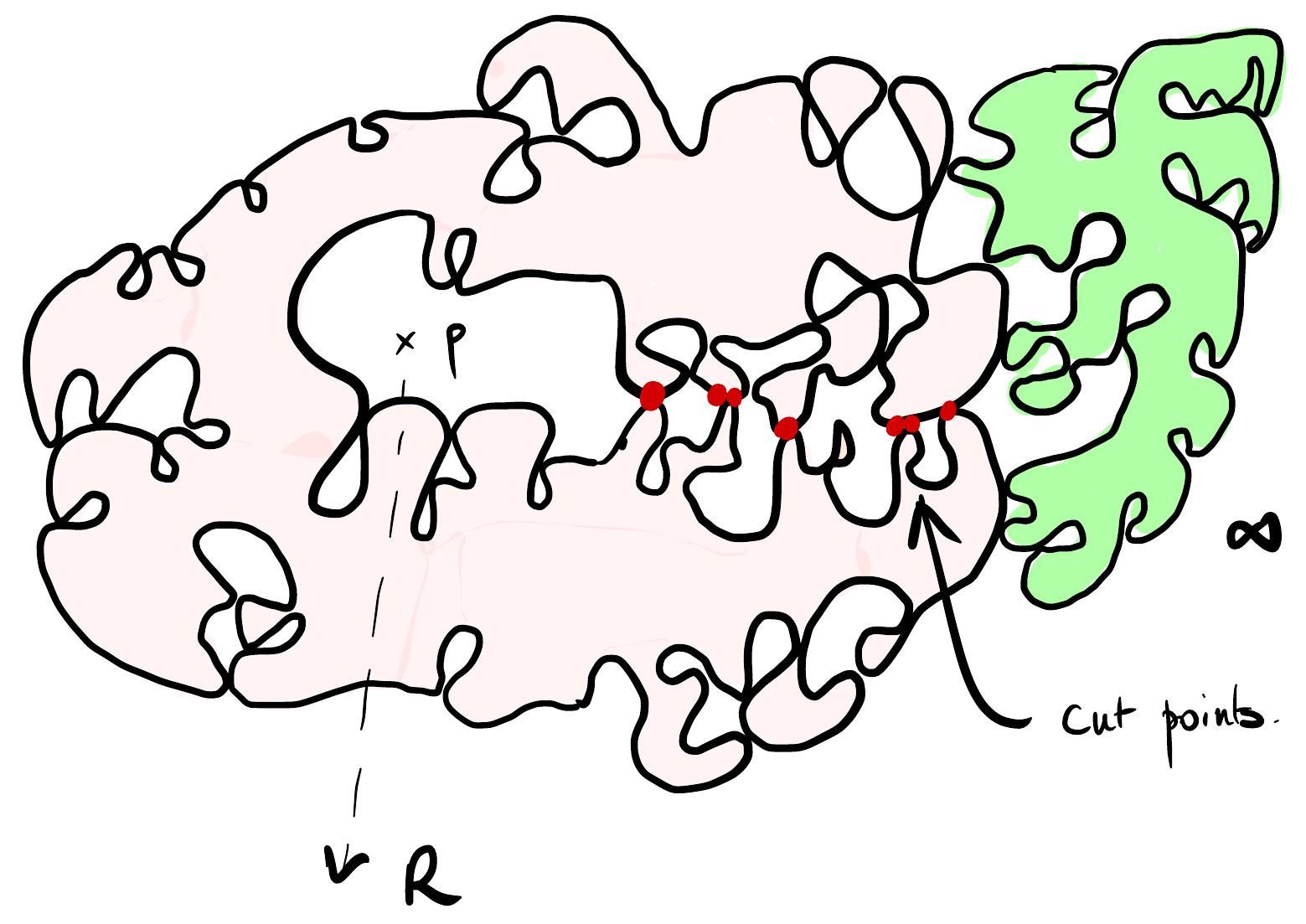}
\caption{Illustration of the cut points in the dense case. We shall prove that roughly $R^{4-2a}$ cut points separate the origin from infinity in the ball of radius $R$. Combining a diffusivity estimate for the random walk flashed on these cut points and the density $R^{5-4a}$ of these cut points yields an upper bound of $1/3$ on the diffusivity exponent.}
\label{fig:cut_points}
\end{figure}

Note that Lemma~\ref{lem:conditions_generales_sous_diff} does not apply to the dual map $\Map_\infty^\dag$ in the dense regime $a < 2$ since the latter has exponential growth~\cite{Budd-Curien:Geometry_of_infinite_planar_maps_with_high_degrees}. In this case, another control on the cut points (Lemma~\ref{lem:cut_face_peeling_layer}) will easily entail Theorem~\ref{thm:sous_diff_dual_dense}.

\section{\texorpdfstring{Peeling of $\Map_\infty$}{Peeling of M\_infinity}}
\label{sec:peeling}
In this section we recall the background of the peeling process on Boltzmann maps and refer to~\cite{Curien:StFlour} for details. We also recall the definition of critical weight sequences of type $a$ as well as the probability measure  $\nu$ which drives the peeling process of Boltzmann maps.

\subsection{\texorpdfstring{Weight sequences of type $a \in (3/2;5/2]$}{Weight sequences of type a in (3/2;5/2]}}
\label{sec:nu}

As soon as the weight sequence $ \q$ is fixed and admissible, we denote by $W^{(\ell)}$ the total $\q$-weight of all finite maps with a root face of degree $2 \ell$. Under those assumptions, a very general enumeration result, see~\cite[Lemma~3.13]{Curien:StFlour} gives a ``strong ratio limit'' theorem, in the sense that the ratio $W^{(\ell+1)} / W^{(\ell)}$ converges to some explicit constant $c_{ \q}>1$ when $\ell \to \infty$.
These numbers, together with the weight sequence $\q$ enable us to define a probability distribution $\nu$ (see~\cite[Lemma~5.2]{Curien:StFlour}) by
\begin{equation}\label{eq:nu}
\nu(k) = \left\{ \begin{array}{ll} q_{k+1} c_{ \q}^{k}  & \text{for } k \ge 0 \\
2W^{(-1-k)} c_{ \q}^{k} & \text{for } k \le -1. \end{array} \right.
\end{equation}
The criticality condition on $ \q$ is then equivalent to the fact that a $\nu$-random walk oscillates (see~\cite{Budd:The_peeling_process_of_infinite_Boltzmann_planar_maps} and~\cite[Theorem~5.4]{Curien:StFlour}). The renewal function $h^{\uparrow}$ of this walk is then universal (it does not depend on $\q$ once it is critical) and is equal to 
\begin{equation}\label{eq:h_up}
h^{\uparrow} (k) =  2k \cdot 2^{-2k} \binom{2 k}{k} \qquad\text{for } k \ge1.
\end{equation}
Furthermore, the weight sequence is of type $a \in (3/2;5/2)$ if and only if, as $k \to \infty$,
\begin{equation}\label{eq:tailnu}
\nu(-k) \sim \mathsf{p}_{\q} \cdot k^{-a} \qquad\text{and}\qquad \nu([k,\infty)) \sim \frac{\mathsf{p}_{\q}}{a-1} \cos(a \pi) \cdot k^{-a+1},
\end{equation}
where $\mathsf{p}_{\q}$ is some constant which depends on our weight sequence $\q$.  Also $\q$ is of type $a=5/2$ if  $\nu(-k) \sim \mathsf{p}_{\q} \cdot k^{-5/2}$ and $\nu([k,\infty)) = o( k^{-3/2})$ as $k \to \infty$. We refer to~\cite[Chapter~V]{Curien:StFlour} for details.

\subsection{\texorpdfstring{Filled-in peeling of $\Map_\infty$}{Filled-in peeling of M\_infinity}}

In this section, we briefly recall the filled-in peeling process of $\Map_\infty$ and refer the reader to~\cite{Curien:StFlour} for details. This will be our key tool in order to prove the intermediate results mentioned above. \medskip 

We shall use the root transformation, see~\cite[Figure~3.2]{Curien:StFlour}, to see any bipartite planar map as a map with a root face of degree $2$ after splitting the root edge. 
A \emph{submap} $\e$ with a unique hole of a given map $\map$ is a map with a distinguished face with a simple boundary (called its \emph{hole}), such that $\map$ can be recovered by gluing a proper map with (general) boundary inside the hole of $\e$. A filled-in peeling process of $\Map_\infty$ is a sequence of submaps $ \e_{0} \subset \e_{1} \subset \cdots \subset \Map_{\infty}$ constructed recursively started from $\e_{0} $ being simply the $2$-gon containing the root edge in the following way. At each step $n$, we select an edge $ \mathcal{A}( \e_{n})$ (the \emph{peel edge}) on the boundary of the hole of $ \e_{n}$ and aim at revealing its ``hidden'' side; two cases may appear, as illustrated in Figure~\ref{fig:peeling}. We denote by $\ell$ the half-perimeter of the hole of $\e_n$.
\begin{itemize} 
\item Either the peel edge is incident to a new face in $ \Map_{\infty}$ of degree $2k$, then $ \e_{n+1}$ is obtained from $ \e _{n}$ by gluing this face on the peel edge without performing any other identification. This event is called event of type $ \mathsf{C}_{k}$ and appear with probability 
\[\Pr{\mathsf{C}_k} =  \frac{h^\uparrow(\ell+k-1)}{h^\uparrow(\ell)} \nu(k-1).\]
\item Or the peel edge is incident to another face of $ \e_{n}$ in the map $\Map_{\infty}$, in which case we perform the identification of the two boundary edges of $\e_n$. When doing so, the hole  of $ \e_{n}$ of perimeter, say $2\ell$, is split into two holes of perimeter $2\ell_{1}$ and $2 \ell_{2}$ with $\ell_{1}+\ell_{2}=\ell-1$. Since $\Map_{\infty}$ is one-ended almost surely, only one of these holes contains an infinite region in $\Map_{\infty}$. We then fill-in the finite hole with the corresponding map inside $ \Map_{\infty}$ to obtain $  \e_{n+1}$. We speak of event of type $ \mathsf{G}_{*,\ell_{1}}$ or $ \mathsf{G}_{\ell_{2}, *}$ depending whether the finite hole is on the left or on the right of the peel edge and they happen with probability 
\[\Pr{\mathsf{G}_{*,k}} = \Pr{\mathsf{G}_{k,*}} =  \frac{1}{2}\frac{h^\uparrow(k)}{h^\uparrow(\ell)} \nu(k-\ell).\]
\end{itemize}

\begin{figure}[!ht]\centering
\includegraphics[width=.85\linewidth]{}
\caption{Illustration of the filled-in peeling process. In the left-most Figure~we have explored a certain region $ \e_{n} \subset \Map_{\infty}$ corresponding to the faces in pink glued by the edges in gray. Depending on the edge to peel at the next step we may end-up either with an event of type $ \mathsf{C}_{2}$ (top figures), or an event of type $ \mathsf{G}_{3, *}$ (bottom figures).}
\label{fig:peeling}
\end{figure}

Let us stress that the choice of the peel edge at each step is given by a peeling algorithm $\mathcal{A}$ which may depend on another source of randomness as long as it is independent of the unrevealed part. 
In the next subsection, we shall describe a particular algorithm designed to reveal the hull of the balls one after the other. Another property that we shall use is the \emph{spatial Markov property} which says that for any time $n$, the map filling-in the hole of $\e_n$ is independent of $\e_n$ and is distributed as $\Map_\infty^{(\ell)}$ the infinite Boltzmann map of the plane with a root face of degree $2\ell$.

\subsection{Applications: Peeling by layers}
\label{sec:peeling_layer}

As a direct consequence of the peeling of $\Map_{\infty}$ we can compute the degree of the root face. Recall that in the above presentation, we used the root transformation~\cite[Figure~3.2]{Curien:StFlour} to see any bipartite planar map as a map with a root face of degree $2$ after splitting the root edge. After gluing back the two sides of this $2$-gon together, the law of the degree of the root face of $ \Map_{\infty}$ is given by the perimeter of the first face we reveal during the peeling process i.e. 
\begin{equation}\label{eq:rootdegree}
\Pr{\deg( \rootface)=2k} =   \frac{h^\uparrow(k)}{h^\uparrow(1)} \nu(k-1), \qquad \text{for } k \ge 1.
\end{equation}
Let us describe two peeling algorithms we will use later.

\subsubsection{Peeling by layers on the dual}
The peeling by layers on the dual, algorithm $\mathcal{A}_{\mathrm{dual}}$, is designed to reveal the hull of the dual balls centred at the root face one after the other. First, as in any peeling, set $\e_0$ to be a $2$-gon which serves as the root face. The algorithm  $\mathcal{A}_{\mathrm{dual}}$ will then ``turn around'' the boundary and peel at edges adjacent to a face whose dual graph distance to the root face is minimal. If $\theta_{r}$ is the first time at which no edge is adjacent to a face at dual distance $r$ from the root face then the piece revealed is equal to the hull of the ball\footnote{We mean here the map obtained by keeping only the faces  that are at dual distance less than or equal to $r$ from the root face and cutting along all the edges which are adjacent on both sides to faces at dual distance $r$ from $\rootface$.} of radius $r$ in $ \Map_{\infty}^{\dagger}$. See Figure~\ref{fig:peelinglayers} for an illustration and~\cite[Chapter~13.2]{Curien:StFlour} for details.

\begin{figure}[h!]\centering
\includegraphics[width=.6\linewidth]{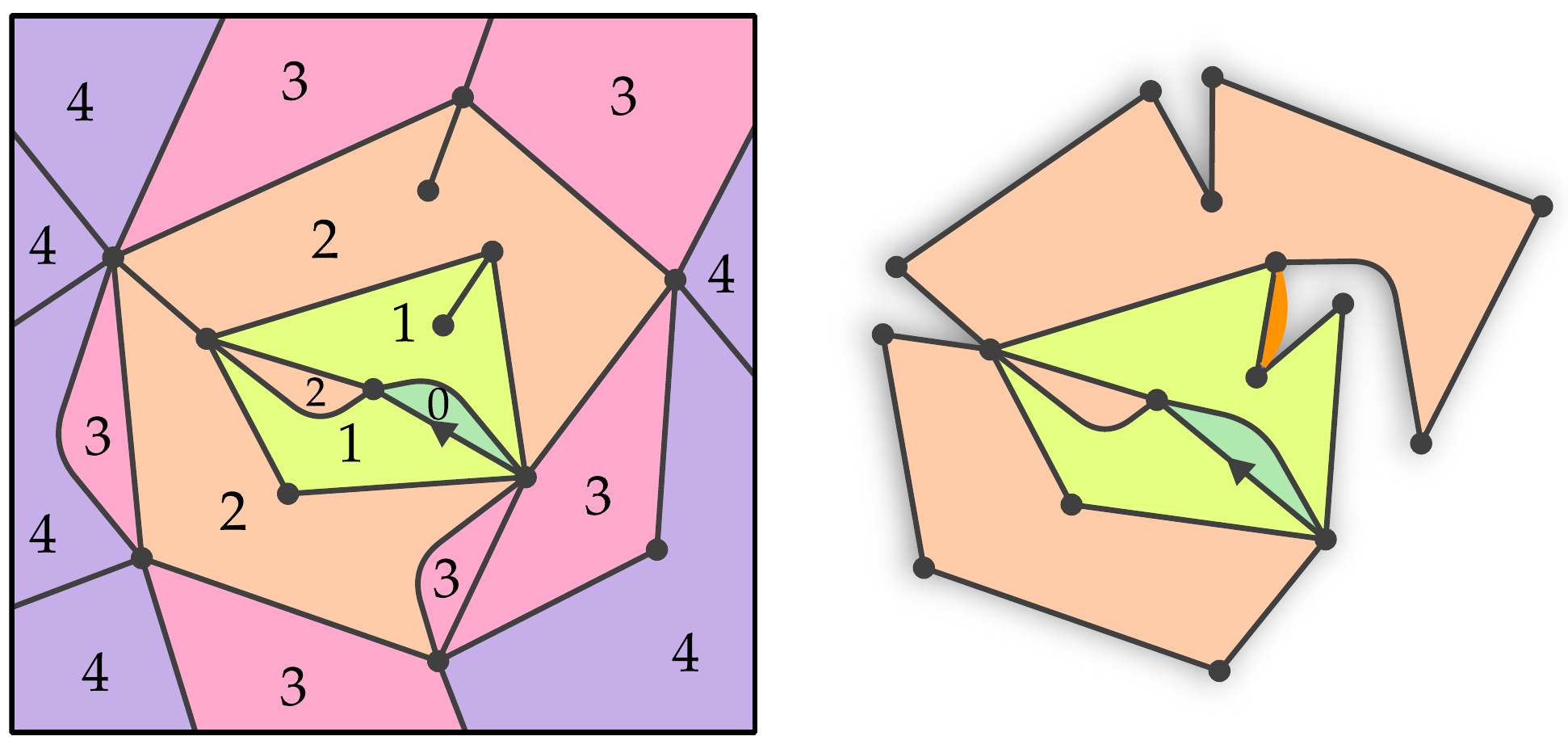}
\caption{(from~\cite{Curien:StFlour})  The left Figure~shows a portion of an infinite planar map with faces labelled according to (dual) graph distance to the root face. The submap on the right depicts a possible state of the peeling by layers. The next edge to peel is indicated in orange.}
\label{fig:peelinglayers}
\end{figure}

\subsection{Peeling by layers on the primal}

We shall also need the following algorithm $\mathcal{A}_{\mathrm{metric}}$, illustrated in Figure~\ref{fig:ametricquad}, which discovers one after the other the hull of the balls of the original map centred at the origin $\rho$ of the root edge. This is done as above by turning around the boundary of the explored maps and always peeling at edge $\mathcal{A}_{\mathrm{metric}}( \e)$ whose right end point minimises the distance (for the primal graph distance inside $\e$) to the origin vertex of the map (if there are several choices, we break the ties deterministically). The main difference with $ \mathcal{A}_{ \mathrm{dual}}$ is that the distances of the vertices along the boundary of $\e_{i}$ to the origin may differ in $\e_{i}$ and in $\Map_{\infty}$. However, it is easy to check that they agree for those vertices at minimal graph distance from the origin.

\begin{figure}[!ht]\centering
\includegraphics[width=0.8\linewidth]{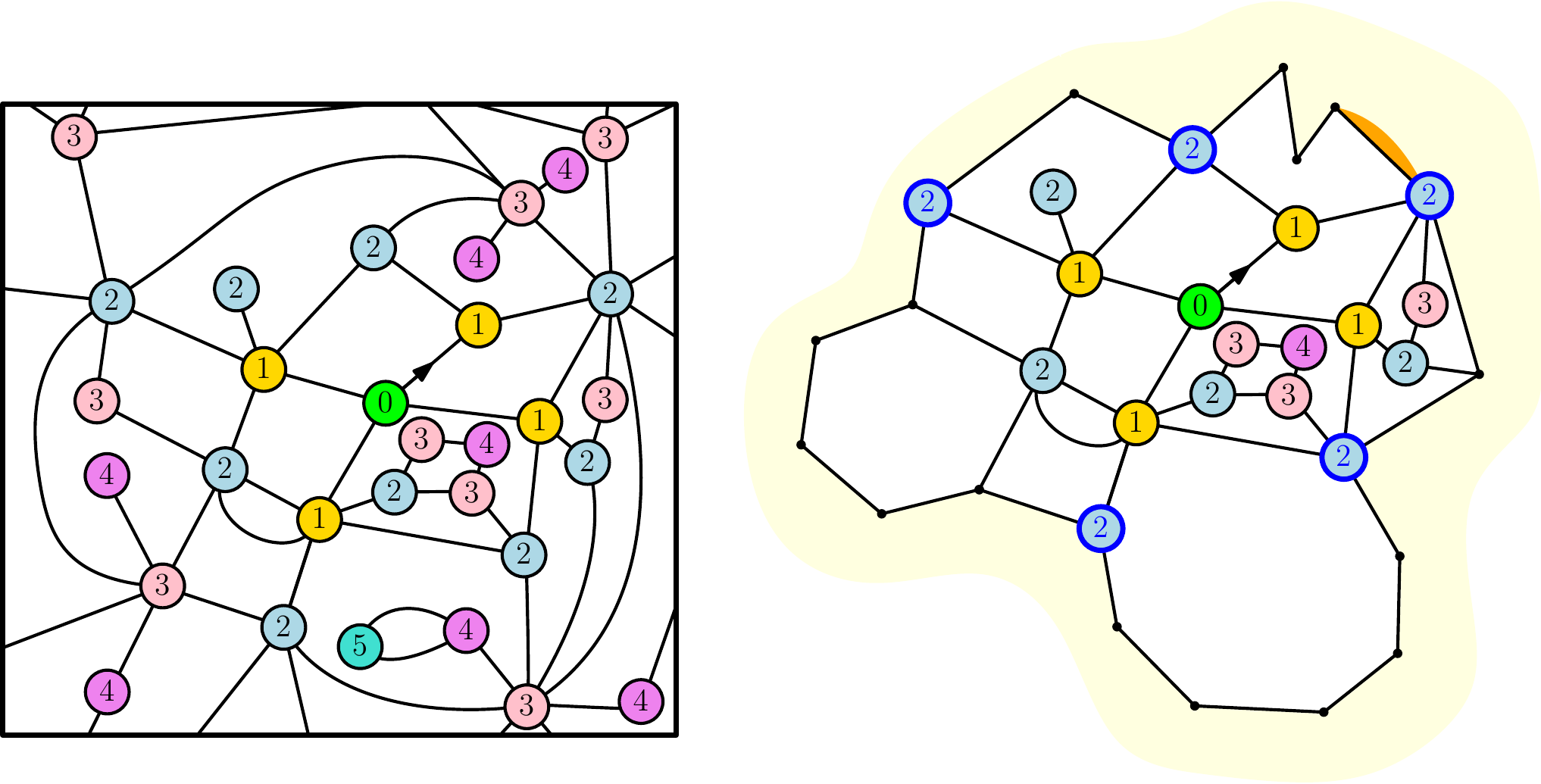}
\caption{Illustration of the algorithm $ \mathcal{A}_{ \mathrm{metric}}$. The labelling represents distances of the vertices to the origin. On the right a current state $\e_i$ of the exploration. Notice that the vertices on the boundary with minimal distances to the origin have the same labels in $\e_{i}$ and in the underlying map. We always peel at an edge whose right end point minimises this distance (inside $\e_{i}$).}
\label{fig:ametricquad}
\end{figure}
 
Applying the above algorithm to discover the $1$-neighbourhood of the origin in the map, it is easy to prove that the vertex degree of the origin in $\Map_{\infty}$ has an exponential tail (compared with~\eqref{eq:rootdegree} in the dual map), that is 
\begin{equation}\label{eq:rootdegreevertex}
\Pr{\deg(\rho) \ge k} \le \ex^{- c k}, \qquad k \ge 0,
\end{equation}
for some constant $c>0$ (depending on the critical weight sequence $ \q$). See~\cite[Lemma~15.7]{Curien:StFlour}, also~\cite[Section~4.1]{Bjornberg-Stefansson:Recurrence_of_bipartite_planar_maps} or~\cite[Theorem~7.1]{Stephenson:Local_convergence_of_large_critical_multi_type_Galton_Watson_trees_and_applications_to_random_maps} for a proof based on a Schaeffer-type construction.

\section{Subdiffusivity via cut points in the dense phase}
\label{sec:dense}

We focus in this section on the dense regime $a \in (\frac{3}{2}, 2)$ which is simpler than the dilute regime $a \in (2, \frac{5}{2})$ because of the existence of \emph{cut points} for these lattices: A cut edge in $\Map_{\infty}$ is an edge whose removal disconnects the maps into two parts; if the origin belongs to the finite one, then the cut edge has to be traversed by any infinite path starting from the origin. In the case of $\Map_{\infty}^{\dagger}$ we should consider the cut faces, which are faces of $\Map_{\infty}$ whose removal disconnects the root face from $\infty$ for the dual graph $\Map_{\infty}^{\dagger}$.

\subsection{The dual map}
\label{sec:dual_dense}

Let us start with the dual map $\Map_\infty^\dag$ for which our proof of Theorem~\ref{thm:sous_diff_dual_dense} is quite simple. The main technical ingredient is the following lemma which is based on results of~\cite[Section~5]{Budd-Curien:Geometry_of_infinite_planar_maps_with_high_degrees}. Recall that $\rootface$ denotes the root face of the map.

\begin{lem}\label{lem:cut_face_peeling_layer}
Fix $a \in (\frac{3}{2},2)$.
There exist $c_a \in (0, \infty)$, $\eta \in (0,1) $ and $R_0 \ge 1$ such that for all $R \ge R_0$ and all $k \ge 1$, in $\Map_\infty$ we have
\[\Pr{\text{there is no cut face of degree} \ge \ex^{c_a R} \text{ at dual distance} \le kR \text{ from } \rootface} \le \eta^k.\]
\end{lem}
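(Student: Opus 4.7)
The plan is to use the dual peeling by layers $\mathcal{A}_{\mathrm{dual}}$ together with the spatial Markov property of the peeling process to reduce the estimate to a uniform one-layer claim, and then to iterate it $k$ times. Write $\tau_r$ for the first peeling time at which the hull $\hBall(\Map_\infty^\dagger, r)$ is entirely revealed and denote by $2 P_r$ the perimeter of the corresponding hole. By the spatial Markov property, conditionally on $\e_{\tau_{(j-1)R}}$, the unexplored part of $\Map_\infty$ is a fresh Boltzmann map $\Map_\infty^{(P_{(j-1)R})}$ independent of the past; moreover, the cut faces sitting at dual distance in the slab $((j-1)R, jR]$ from $\rootface$ are exactly those discovered between the peeling times $\tau_{(j-1)R}$ and $\tau_{jR}$. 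It therefore suffices to establish the following one-layer estimate: there exist $c_a > 0$ and $\eta \in (0,1)$ such that for every half-perimeter $\ell \ge 1$, in $\Map_\infty^{(\ell)}$ the probability that no cut face of degree at least $\ex^{c_a R}$ is discovered during the first $R$ dual layers is at most $\eta$, uniformly in $\ell$. Iterating over $k$ consecutive slabs of dual width $R$ via the spatial Markov property then yields the announced bound $\eta^k$.

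To prove the one-layer estimate I would lean on the sharp analysis of the perimeter process and of the face degrees under the peeling by layers in the dense phase carried out in~\cite[Section~5]{Budd-Curien:Geometry_of_infinite_planar_maps_with_high_degrees}. In the regime $a \in (3/2,2)$ the perimeter process $(P_r)_{r \ge 0}$ grows at an explicit exponential rate $c'_a > 0$ and enjoys exponential concentration around its typical value. Combining this with the power-law tail $\nu(k) \sim \mathsf{p}_\q k^{-a}$ from~\eqref{eq:tailnu} and the standard asymptotics $h^\uparrow(k) \asymp \sqrt{k}$, a union bound over the exponentially many peeling steps contained in the first $R$ dual layers shows that with probability bounded below uniformly in $\ell$, at least one peeling step there is of type $\mathsf{C}_k$ with $k \ge \ex^{c_a R}$ for a suitable constant $c_a > c'_a$. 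Since at the moment of such a discovery the current perimeter is at most of order $\ex^{c'_a R} \ell$, the newly peeled face carries strictly more edges than the remaining old boundary; the $\mathsf{G}$ events subsequently forced by $\mathcal{A}_{\mathrm{dual}}$ then identify the remaining old-boundary edges amongst themselves, so that this face ends up separating $\rootface$ from infinity in the dual, and is therefore a cut face of $\Map_\infty$.

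The main obstacle is precisely this last step, namely turning a face of large degree into a genuine cut face while keeping the argument uniform in the initial perimeter $\ell$. The difficulty is that when $\ell$ is large, a single face of degree $\ex^{c_a R}$ has no reason to be separating on its own. The remedy is to choose $c_a$ strictly above the perimeter-growth exponent $c'_a$ and to discard, via an exponential Markov inequality applied to $(P_r)_{r \le R}$, the unlikely bad event that the perimeter grows faster than expected during the first $R$ layers; on the complementary good event, any face of degree at least $\ex^{c_a R}$ that is discovered during these layers automatically dominates the current boundary and must become a cut face. Once this uniform lower bound is in hand, the spatial Markov property closes the induction over the $k$ successive slabs and the proof is complete.
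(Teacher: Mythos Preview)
Your reduction to a one-layer estimate via the spatial Markov property and the subsequent iteration over $k$ slabs is correct and is exactly what the paper does. The gap is in the one-layer estimate itself, and it is twofold.

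First, the uniformity in $\ell$ breaks down. You look for a face of degree at least $\ex^{c_a R}$, a threshold that does \emph{not} depend on $\ell$, and then argue that it dominates the current boundary provided the perimeter has not grown too fast. But you yourself write that on the good event the perimeter is of order $\ell\,\ex^{c'_a R}$; a face of degree $\ex^{c_a R}$ does not dominate this when $\ell$ is large, no matter how $c_a>c'_a$ is chosen. The exponential Markov bound you invoke controls the \emph{ratio} $P_R/\ell$, not $P_R$ itself, so it cannot force $P_R\le \ex^{c_a R}$ uniformly in $\ell$. Since the iteration feeds the (ever-growing) perimeter of one slab as the initial $\ell$ of the next, this is fatal to the induction.

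Second, even when a newly discovered face does carry more edges than the old boundary, it does not become a cut face automatically. The $\mathsf{G}$ events that follow are not ``forced'' to identify old-boundary edges among themselves; they may just as well identify an old-boundary edge with an edge of the new face, or trigger further $\mathsf{C}$ events. What produces a cut face is a specific $\mathsf{G}_{*,*}$ self-identification of two edges of the large face that pinches off the region containing $\rootface$. This happens with probability bounded below, but it is an additional probabilistic input, not a deterministic consequence of the degree count.

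The paper avoids both problems by reversing the logic: it first lets the perimeter grow for $R-3$ dual layers so that it exceeds $\ex^{c_a R}$ with probability at least $1/2$ (this holds in $\Map_\infty^{(\ell)}$ for any $\ell\ge 1$, since a larger starting perimeter only helps), and then invokes the mechanism of~\cite[Section~5.2.1]{Budd-Curien:Geometry_of_infinite_planar_maps_with_high_degrees}: with probability bounded below one discovers in the next turn a face of degree proportional to the \emph{current} perimeter, and a subsequent $\mathsf{G}_{*,*}$ event pinches it into a cut face. The cut face then has degree at least the perimeter, hence at least $\ex^{c_a R}$, and nothing in the argument requires comparing a fixed threshold to $\ell$.
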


\begin{proof}
Let us perform the peeling by layer on the dual of $\Map_\infty$, i.e. with the algorithm $\mathcal{A}_{\mathrm{dual}}$; recall that for every $k \ge 1$, we denote by $\theta_k$ the least time $i \ge 1$ such that the peeling process at time $i$ has entirely revealed the hull of radius $k$ in the dual map.
Results of~\cite[Section~5]{Budd-Curien:Geometry_of_infinite_planar_maps_with_high_degrees} show that the perimeter of the (hulls of the) balls in $\Map_{\infty}^{\dagger}$ grow exponentially fast. In particular, there exists $c_{a}>0$ such that for $R$ large enough, the perimeter at time $\theta_{R-3}$ is larger than $\ex^{R \mathsf{c_a}}$ with probability at least $1/2$. On this event, in the discussion closing Section~5.2.1 in~\cite{Budd-Curien:Geometry_of_infinite_planar_maps_with_high_degrees}, it is further shown that with a probability uniformly bounded below, there exists a cut face in the hull of radius $R$  for the dual graph distance (i.e.~in the within the next three turns of the peeling algorithm) with degree at least $\ex^{R \mathsf{c_a}}$. In a few words, if one continues the peeling exploration after $\theta_{R-3}$, then there is a probability bounded below that within the next turn we discover a large face of degree proportional to the perimeter, and further that this face will create a cut face when two edges of this face are identified in a $ \mathsf{G}_{*,*}$ event in a way that separates the origin from $\infty$. This discussion shows the case $k=1$ of the proposition.

To get the full statement we just use the spatial Markov property: after time $\theta_{R}$ if $\ell_{R}$ is the half-perimeter of the hole, then the remaining random map to explored is distributed as $\Map_{\infty}^{(\ell_{R})}$. The arguments in~\cite[Section~5]{Budd-Curien:Geometry_of_infinite_planar_maps_with_high_degrees} show that above discussion holds for $\Map_{\infty}^{(\ell)}$ instead of $\Map_{\infty}$: for $R$ large enough we have 
\[\Pr{ \text{in } \Map_{\infty}^{(\ell)} \text{ there is no cut face of degree} \ge \ex^{c_a R} \text{ at dual distance} \le R \text{ from } \rootface} \le \eta,\]
where $\eta <1$ does not depend on $\ell$. The statement of the proposition then follows by exploring up to distances $R,2R,3R, \ldots,  kR$ and combining the Markov property with the above display.
\end{proof}

With this lemma at hand, let us prove Theorem~\ref{thm:sous_diff_dual_dense} on the random walk on $\Map_\infty^\dag$; the argument is depicted in Figure~\ref{fig:proofdensedual}.

\begin{proof}[Proof of Theorem~\ref{thm:sous_diff_dual_dense}]
Fix $ \varepsilon>0$. Our goal is to see that with probability at least $ 1- \varepsilon$, within the first $ \ex^{ R}$ steps of the walk on $\Map_\infty^\dagger$ we do not escape from the dual ball of radius $\delta R^2$ for some $\delta > 0$. Let us first look at the degrees of faces (i.e. vertices of $\Map_\infty^\dagger$) we encounter during this journey:
For any $m \ge 1$,
\[\Pr{ \sup_{0 \le i < \ex^R} \deg(X_i^\dagger) \ge m}
%\le \sum_{0 \le i < \ex^R} \Pr{\deg(X_i^\dagger) \ge m}
\le \ex^R \cdot \Pr{\deg(X_0^\dagger) \ge m}
\le \ex^R \cdot \mathrm{Cst} \cdot  m^{3/2-a},\]
where the first inequality follows from a union bound and the stationarity of the walk, whilst the second follows from~\eqref{eq:rootdegree} and~\eqref{eq:tailnu} and the fact that $h^\uparrow(k) \le 2\sqrt{k}$ for every $k \ge 1$. Taking $m = \ex^{ \frac{2}{ a-3/2}R}$ we deduce that with high probability, the walk does not visit any face of degree larger than $m$ with high probability during the first $\ex^R$ steps. In Lemma~\ref{lem:cut_face_peeling_layer} we put $R\equiv\frac{2}{(a-3/2) c_a}R$ and take $k=k_0$ large enough  so that $\eta^{k_0} \le \varepsilon$, we deduce that for $R$ large enough we have
\[\Pr{\text{there is a cut face of degree} \ge m \text{ at dual distance} \le \frac{2 k_0}{(a-3/2) c_a} R \text{ from } \rootface} \ge 1-\varepsilon.\]
Combining these two findings we already deduce that with high probability, the walk cannot visit such a cut face in the first $\ex^R$ steps and is thus confined in the hull of the ball of radius $\frac{2 k_0}{(a-3/2) c_a}R$ with high probability. However, the dual distances it could reach within this hull could a priori be large. To control them, we choose $\delta >0$ so that $ \ex^2 \eta^\delta < 1$ and put $k= \delta R$ in Lemma~\ref{lem:cut_face_peeling_layer} to deduce that 
\[\Pr{\text{there is no cut face of degree} \ge m \text{ at dual distance} \le \delta R^2 \text{ from } \rootface} 
\le \ex^{- 2R}.\]
By the union bound and stationarity, we deduce that with high probability, during the first $\ex^R$ steps of the walk, we are always able to find a cut face of degree $\ge m$ within distance $\delta R^2$ of the current state. Since we know that we can find such a face a distance $\frac{2 k_0}{(a-3/2) c_a}R$ from the origin, this implies that the walk cannot have reached distance more than $\delta R^2 +  \frac{2 k_0}{(a-3/2) c_a}R$ from the origin, with high probability.
\end{proof}

\begin{figure}[!ht]\centering
\includegraphics[width=8cm]{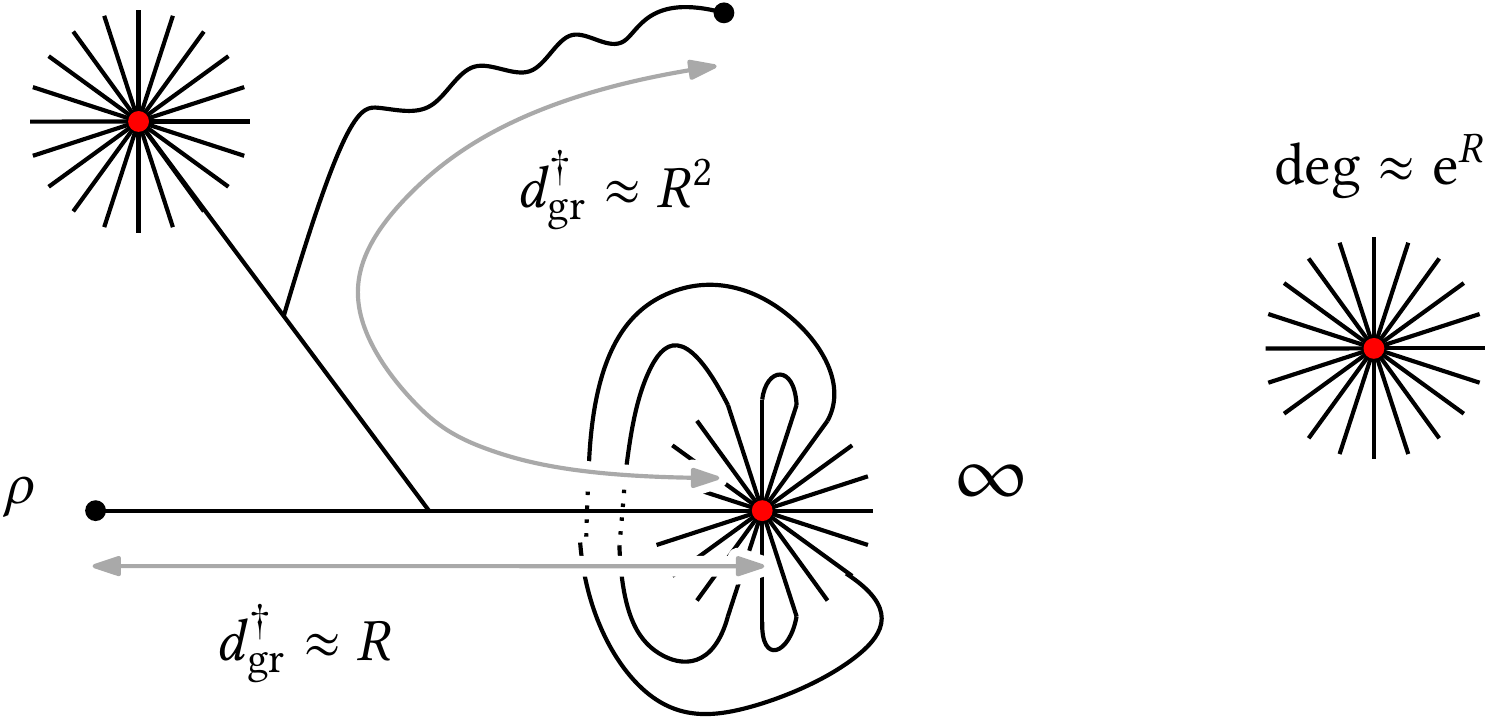}
\caption{Illustration of the proof of Theorem~\ref{thm:sous_diff_dual_dense}. Vertices of degree larger that $\ex^{ \frac{2}{ a-3/2}R}$ are in red and we know that the walk cannot step on them within the first $\ex^{R}$ steps with high probability. Besides, with high probability, one of these vertices is a cut face at distance $\approx R$ from the root face, and for any time $0 \le i < \ex^{R}$ we can find such a cut face at distance less than $\approx R^{2}$ from the current state at time $i$. We deduce that we cannot escape further away than $\approx R^{2}$ from the origin.}
\label{fig:proofdensedual}
\end{figure}

\begin{rem}
Theorem~\ref{thm:sous_diff_dual_dense} shows a $\log^2 n$ upper bound for the displacement of the walk on $\Map_\infty^\dag$ up to time $n$ in the dense regime $a < 2$. Since balls exhibit an exponential volume growth~\cite{Budd-Curien:Geometry_of_infinite_planar_maps_with_high_degrees} we believe that this displacement grows in fact like some constant times $\log n$.
\end{rem}

\subsection{Estimates on cut edges on the primal map}
\label{sec:primal_dense}

Let us next focus on the primal map, still in the dense regime $a < 2$. Recall that $\Map_\infty$ has polynomial volume growth~\cite[Proposition~2]{Curien-Marzouk:Markovian_explorations_of_random_planar_maps_are_roundish}; we shall prove Theorem~\ref{thm:sous_diff_primal} by relying on Lemma~\ref{lem:conditions_generales_sous_diff}. An edge $e$ of $\Map_{\infty}$ is called an \emph{$R$-cut edge} if it separates from infinity a part of the map of volume (e.g. the number of edges, but it could be the number of vertices or faces) at least $R^{2a-1}$. As alluded after Lemma~\ref{lem:conditions_generales_sous_diff}, we shall consider the set $\mathcal{C}_R$ made of all the extremities of $R$-cut edges, the set $\mathcal{G}_R$ in Lemma~\ref{lem:conditions_generales_sous_diff} shall be taken as $\mathcal{C}_{cR}$ for some well chosen $c$.
The next result bounds the density of $R$-cut edges in the map.

\begin{lem}%[The root edge is a cut edge]
\label{lem:racine_cut_edge}
There exist two constants $0 < c < C < \infty$ such that for every $R \ge 1$,
\[c R^{5-4a} \le \Pr{\text{the root edge is an } R \text{-cut edge}} \le C R^{5-4a}.\]
\end{lem}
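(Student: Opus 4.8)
The plan is to compute the probability that the root edge is an $R$-cut edge via the filled-in peeling process, using the spatial Markov property to reduce the event to a statement about the peeling steps. First I would set up the right exploration: starting from $\Map_\infty$, peel so as to reveal whether the root edge separates a large finite region from infinity. Concretely, after the root transformation the root edge sits in a $2$-gon; I would peel this edge, and the relevant event is a $\mathsf{G}_{*,k}$ or $\mathsf{G}_{k,*}$ step --- the root edge is glued to another part of the already-revealed structure so that a finite hole of half-perimeter $k$ is pinched off --- followed by the requirement that the finite map filling this hole has volume at least $R^{2a-1}$. By the spatial Markov property, conditionally on such a pinching event the finite piece is a $\q$-Boltzmann map with boundary of perimeter $2k$, independent of everything else, so
\[
\Pr{\text{root edge is $R$-cut}} = \sum_{k \ge 1} \Pr{\mathsf{G}_{k,*} \text{ or } \mathsf{G}_{*,k}} \cdot \Pr{\,|\mathfrak{B}^{(k)}| \ge R^{2a-1}\,},
\]
where $\mathfrak{B}^{(k)}$ is a Boltzmann map with boundary $2k$. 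Using the peeling transition probabilities, $\Pr{\mathsf{G}_{k,*} \text{ or } \mathsf{G}_{*,k}} = \frac{h^\uparrow(k)}{h^\uparrow(1)}\nu(k-1-\ell)$ with $\ell=1$ the half-perimeter of the $2$-gon, i.e. this equals $\frac{h^\uparrow(k)}{h^\uparrow(1)}\nu(k-2)$; since $h^\uparrow(k) \asymp \sqrt{k}$ and, by~\eqref{eq:tailnu}, $\nu(-j) \sim \mathsf{p}_\q j^{-a}$, the weight of a pinch of size $k$ is of order $k^{1/2-a}$ for large $k$.

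The second ingredient is the volume tail of a Boltzmann map with perimeter $2k$: I would invoke the known scaling $|\mathfrak{B}^{(k)}|$ is of order $k^{2/(2a-1)}$ (equivalently, the perimeter of such a map scales like the volume to the power $\frac{2a-1}{2}$ --- this is exactly the $d=2a-1$ volume exponent recalled in the introduction and established in the peeling references), together with a one-sided large deviation / tightness bound of the form $\Pr{|\mathfrak{B}^{(k)}| \ge x} $ decaying fast once $x \gg k^{2/(2a-1)}$ and being bounded below by a positive constant when $x \asymp k^{2/(2a-1)}$. From this, $\Pr{|\mathfrak{B}^{(k)}| \ge R^{2a-1}}$ is bounded below by a constant when $k \gtrsim R^{(2a-1)^2/2}$ and decays when $k$ is much smaller. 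Plugging into the sum, the dominant contribution comes from $k$ of order $R^{(2a-1)^2/2}$ and larger; a routine summation $\sum_{k \gtrsim R^{(2a-1)^2/2}} k^{1/2-a}$ gives order $\big(R^{(2a-1)^2/2}\big)^{3/2-a} = R^{(2a-1)^2(3/2-a)/2}$. One then checks that the arithmetic identity $(2a-1)^2(3/2-a)/2 = 5-4a$ holds --- indeed expanding, $(2a-1)^2 = 4a^2-4a+1$ and $(3/2-a)(4a^2-4a+1)/2$ simplifies to $5-4a$ only after I double-check the scaling exponents; if the bookkeeping gives a different power I would recalibrate the volume exponent of $\mathfrak{B}^{(k)}$, but the structure of the argument (weight of pinch $\times$ volume-tail, summed over $k$) is robust. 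The matching upper bound follows from the same computation by using the upper tail bound on $|\mathfrak{B}^{(k)}|$ to control the terms with small $k$.

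The main obstacle is obtaining the two-sided estimate on $\Pr{|\mathfrak{B}^{(k)}| \ge R^{2a-1}}$ with enough uniformity in both $k$ and $R$: one needs not just the scaling order of the volume but genuine upper- and lower-tail control, i.e. that the volume of a Boltzmann map with boundary $2k$ is, with probability bounded below, comparable to its typical value, and has a sufficiently light upper tail so that the sum over $k$ converges at the right rate. This should follow from the Bouttier--Di Francesco--Guitter or peeling-based descriptions of Boltzmann maps with a boundary --- the perimeter process of a peeling exploration is an explicit random walk/Lévy process whose hitting time of $0$ governs the volume --- but assembling the precise one-sided deviation bounds, uniformly in the perimeter, is the delicate point. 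A secondary subtlety is making sure the event ``root edge is an $R$-cut edge'' is exactly captured by a single pinching step of the peeling and not double-counted or missed (e.g. the finite side could be on either side of the oriented root edge, hence the two symmetric $\mathsf{G}$ events, and one must be careful that the $2$-gon root transformation does not spuriously create or destroy cut edges); this is handled by choosing the peeling algorithm to peel the root edge first and reading off the decomposition directly.
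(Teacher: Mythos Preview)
Your overall strategy --- decompose over the half-perimeter $k$ of the finite piece pinched off, use the spatial Markov property to factor out the volume tail of a Boltzmann map with boundary $2k$, and sum --- is exactly the paper's approach. However, both of your inputs are wrong, and together they prevent you from recovering the exponent $5-4a$ (as you yourself noticed when the arithmetic failed to close).

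\textbf{The peeling setup.} Starting from the root $2$-gon (half-perimeter $\ell=1$), a single $\mathsf{G}_{*,k}$ or $\mathsf{G}_{k,*}$ step is impossible for $k\ge 1$: the two resulting holes have half-perimeters summing to $\ell-1=0$, so both are trivial. The root edge is a cut edge precisely when the \emph{same} face lies on both sides of it; to detect this via peeling you must peel \emph{both} sides of the digon. The paper does exactly this: first a $\mathsf{C}_\ell$ step discovers a face of degree $2\ell$ on one side, then a $\mathsf{G}$ step on the other side identifies the peel edge back onto that face, swallowing a finite region of half-perimeter $k\le \ell-1$. Summing over $\ell\ge k+1$ one gets
\[
\Pr{P=k}\ \asymp\ \nu(-k)\sum_{\ell\ge k}\nu(\ell)\,h^\uparrow(\ell-k)\ \asymp\ k^{-a}\cdot k^{3/2-a}\ =\ k^{3/2-2a},
\]
not $k^{1/2-a}$.

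\textbf{The volume--perimeter scaling.} The correct relation for a finite Boltzmann map with boundary $2k$ is $|\Map^{(k)}|\asymp k^{a-1/2}$ (i.e.\ $k^{-(a-1/2)}|\Map^{(k)}|$ has a nondegenerate limit), so perimeter scales like volume$^{2/(2a-1)}$. You inverted this: your ``volume $\asymp k^{2/(2a-1)}$'' comes from confusing the ball-radius--volume exponent $d=2a-1$ with the perimeter--volume exponent. With the right scaling, the threshold for $|\Map^{(k)}|\ge R^{2a-1}$ is $k\gtrsim R^2$, and then
\[
\Pr{V\ge R^{2a-1}}\ \asymp\ \Pr{P\ge R^2}\ \asymp\ \sum_{k\ge R^2} k^{3/2-2a}\ \asymp\ R^{5-4a},
\]
which is the desired bound. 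For the upper bound the paper also controls the contribution of $k<R^2$ via Markov's inequality using $\E[|\Map^{(k)}|]\asymp k^{a-1/2}$; this replaces the ``light upper tail'' you were worried about and is in fact all that is needed.
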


\begin{proof}
Let us denote by $P \ge 0$ the half-perimeter and by $V \ge 0$ the volume of the finite map separated from infinity by the root edge (this map could be reduced to the vertex map). We first claim that for $k \ge 1$ we have
\begin{equation}\label{lawperim}
c_{1} k^{3/2-2a} \le \Pr{P=k} \le c_{2} k^{3/2-2a}
\end{equation}
for some $c_{1},c_{2} >0$.  Indeed, $P=k$ if the following occurs when peeling $\Map_\infty$: We start with $\e_0$ being a digon obtained by opening up the root edge and we peel both sides one after the other; first we discover a large face, with degree, say, $2\ell \ge 2k+2$, and then, at the second step, the peel edge gets identified with another edge on this large face, and swallows a part of length $2k$ containing the finite part of the map. 
From the transition probabilities of the peeling recalled in Section~\ref{sec:peeling}, this occurs with probability
\[\sum_{\ell \ge k+1} \nu(\ell-1) \frac{h^\uparrow(\ell)}{h^\uparrow(1)} \cdot \frac{1}{2} \nu(-k-1) \frac{h^\uparrow(\ell-k-1)}{h^\uparrow(\ell)}
= \frac{1}{2} \frac{\nu(-k-1)}{h^\uparrow(1)} \sum_{\ell \ge k+1} \nu(\ell-1) h^\uparrow(\ell-k-1).\]
Recall that $k^{a} \nu(-k)$, as well as $k^{a-1}\nu([k, \infty))$ and $k^{-1/2} h^\uparrow(k)$ all converge to positive and finite limits, so the preceding display is bounded above by some constant times
\begin{align*}
k^{-a} \sum_{\ell \ge k} \nu(\ell) \sqrt{\ell}
&= k^{-a} \sum_{K \ge 0} \sum_{\ell = 2^K k}^{2^{K+1} k} \nu(\ell) \sqrt{\ell}
\\
&\le k^{-a} \sum_{K \ge 0} \nu([2^K k, \infty)) \sqrt{2^{K+1} k}
%\\
%&\le c' k^{-a} \sum_{K \ge 0} (2^K k)^{1-a} \sqrt{2^{K+1} k}
\\
&\le C k^{3/2 - 2a} \sum_{K \ge 0} (2^{K+1})^{3/2 - a},
\end{align*}
for some $C > 0$, and a similar lower bound holds. The last series converges since $a > 3/2$.

Now recall from the spatial Markov property that once such an identification is made, the map which fills-in the finite hole in the peeling is independent and has the law of a finite Boltzmann map $\Map^{(k)}$ with perimeter $2k$; according 
to~\cite[Proposition~10.4]{Curien:StFlour} (see also~\cite[Proposition~3.4]{Budd-Curien:Geometry_of_infinite_planar_maps_with_high_degrees} for the number of vertices as notion of volume)
admits the following scaling limit: $k^{-(a-1/2)} |\Map^{(k)}|$ converges in distribution to a non degenerate random variable of support $ \R_{+}$ as $k \to \infty$. We deduce that
\[\Pr{V \ge R^{2a-1}} \ge c \cdot \Pr{P \ge R^2} \underset{\eqref{lawperim}}{\ge} c' R^{5-4a}\]
for some constants $c,c' >0$. For the upper bound we also need to consider the case $P < R^2$.
Proposition~10.4 in~\cite{Curien:StFlour} (or Proposition~3.4 in~\cite{Budd-Curien:Geometry_of_infinite_planar_maps_with_high_degrees}) also proves that $k^{-(a-1/2)} \E[|\Map^{(k)}|]$ converges to some non degenerate constant as $k \to \infty$ so we deduce from Markov's inequality that 
\[\Pr{V \ge R^{2a-1}} \le \Pr{P \ge R^{2}} + \Es{P^{a-1/2} \ind{P \le R^{2}}} R^{-(2a-1)} \underset{\eqref{lawperim}}{\le} C R^{5-4a}\]
for some $C>0$.
\end{proof}

Recall that we are interested in the set $\mathcal{C}_R$ made of all the \emph{extremities} of $R$-cut edges. By stationarity, the probability that $X_n$ belongs to $\mathcal{C}_R$ does not depend on $n$ and is equal to the probability that the origin $\rho$ of the root edge belongs to $\mathcal{C}_R$. The next result provides Assumption~\ref{H_densite} of Lemma~\ref{lem:conditions_generales_sous_diff}.

\begin{prop}[Density of $\mathcal{C}_{R}$]%[The root edge is a cut edge]
\label{prop:sommet_racine_cut_edge}
There exist two constants $0 < c_1 < c_2 < \infty$ such that for every $R \ge 2$,
\[c_1 R^{5-4a} \le \Pr{\rho \in \mathcal{C}_R} \le c_2 R^{5-4a} \log R.\]
\end{prop}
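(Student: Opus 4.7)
The lower bound is immediate from Lemma~\ref{lem:racine_cut_edge}: if the root edge $\rootedge$ happens to be an $R$-cut edge, its origin $\rho$ is one of its endpoints and therefore belongs to $\mathcal{C}_R$, yielding $\Pr{\rho \in \mathcal{C}_R} \ge \Pr{\rootedge \text{ is an } R\text{-cut edge}} \ge c_1 R^{5-4a}$.

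For the upper bound, let $N_R(\rho)$ denote the (random) number of unoriented $R$-cut edges incident to $\rho$, so that by the union bound $\Pr{\rho \in \mathcal{C}_R} \le \Es{N_R(\rho)}$. The key input is the re-rooting identity
\[
\Es{\frac{N_R(\rho)}{\deg(\rho)}} \;=\; \Pr{\rootedge \text{ is an } R\text{-cut edge}} \;\le\; C\, R^{5-4a},
\]
where the inequality is Lemma~\ref{lem:racine_cut_edge}. The equality reflects the Boltzmann structure of $\Map_{\infty}$: for any finite bipartite planar map with fixed un-rooted shape, all $2|E|$ oriented-edge rootings receive the same Boltzmann mass, so conditionally on the unrooted map and on the origin $\rho$, the oriented root edge $\rootedge$ is uniformly distributed among the $\deg(\rho)$ oriented edges out of $\rho$. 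This property passes to the local limit $\Map_{\infty}$ and is closely related to its stationarity under the simple random walk, \cite[Proposition~7.9]{Curien:StFlour}.

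Since $\deg(\rho)$ is unbounded, we cannot simply multiply through; instead we truncate. Writing $N_R(\rho) = \deg(\rho) \cdot \frac{N_R(\rho)}{\deg(\rho)}$ and using $N_R(\rho)/\deg(\rho) \in [0,1]$, for any threshold $M \ge 1$ we get
\[
\Es{N_R(\rho)} \;\le\; M \cdot \Es{\frac{N_R(\rho)}{\deg(\rho)}} + \Es{\deg(\rho) \ind{\deg(\rho) > M}} \;\le\; C\, M\, R^{5-4a} + C'\, \ex^{-cM},
\]
where the second term is controlled by the exponential tail~\eqref{eq:rootdegreevertex} of $\deg(\rho)$. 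Choosing $M = K \log R$ with $K$ large enough so that $R^{-cK} \le R^{5-4a}$ yields $\Es{N_R(\rho)} \le C'' R^{5-4a} \log R$, as required.

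The only non-routine point in this argument is the re-rooting identity above, but it is by now standard for Boltzmann maps. The logarithmic gap between the upper and lower bounds then arises naturally from the truncation: on the event $\{\rho \in \mathcal{C}_R\}$, $\deg(\rho)$ is typically of order $\log R$, and each of its $\sim \log R$ incident edges carries probability $\sim R^{5-4a}$ of being $R$-cut, so the heuristic already predicts the correct order of magnitude.
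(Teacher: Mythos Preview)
Your argument is correct and follows essentially the same route as the paper: both the lower bound (immediate from Lemma~\ref{lem:racine_cut_edge}) and the upper bound rely on the invariance of $\Map_\infty$ under re-rooting at the oriented edges emanating from $\rho$, combined with a truncation on $\deg(\rho)$ at level $K\log R$ using the exponential tail~\eqref{eq:rootdegreevertex}. The only cosmetic difference is that the paper bounds $\Pr{\rho\in\mathcal{C}_R}$ directly by $\Pr{\deg(\rho)>K\log R}+K\log R\cdot\Pr{\rootedge\text{ is }R\text{-cut}}$ via a union bound, whereas you pass through $\Es{N_R(\rho)}$ and the identity $\Es{N_R(\rho)/\deg(\rho)}=\Pr{\rootedge\text{ is }R\text{-cut}}$; these are two phrasings of the same computation.
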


\begin{proof}
Since $\rho$ belongs to $\mathcal{C}_R$ if and only if one of its incident edges is an $R$-cut edge, then the lower bound directly follows from Lemma~\ref{lem:racine_cut_edge}. For an upper bound, first notice that the map $\Map_\infty$ is invariant under re-rooting around $\rho$ in the sense that if one replaces the root edge by any other edge incident to $\rho$, and oriented from $\rho$, this new map has the same law as $\Map_\infty$. Therefore the bounds in Lemma~\ref{lem:racine_cut_edge} are valid for all the edges incident to $\rho$. By splitting according to the degree of $\rho$, a union bound yields for every $K > 0$,
\begin{align*}
\Pr{\rho \in \mathcal{C}_R}
&\le \Pr{\deg(\rho) > K \log R} + K \log R \cdot \Pr{\text{the root edge is an } R \text{-cut edge}}
\\
&\le \ex^{-c K \log R} + K \log R \cdot C  R^{5-4a},
\end{align*}
where the second inequality follows from~\eqref{eq:rootdegreevertex} and Lemma~\ref{lem:racine_cut_edge}. We conclude by choosing $K$ large enough so that the last line is smaller than some constant times $R^{5-4a} \log R$.
\end{proof}

It remains to consider Assumption~\ref{H_geom} of Lemma~\ref{lem:conditions_generales_sous_diff}. In this simple setting, the graph induced on the set $\mathcal{C}_R$ simply consists in a discrete one-dimensional chain, and we aim at controlling its length in the ball of radius $R$.
We shall need the following lemma. For $\ell \ge 1$, let $\Map_\infty^{(\ell)}$ be an infinite Boltzmann map with a boundary of length $2 \ell$ and let us denote by $N_\ell$ the number of cut edges which belong to the root face and which separate the origin $\rho$ of the root edge from $\infty$.

\begin{lem}\label{lem:cut_edges_carte_bord}
There exists $\delta, \kappa >0$  such that for every $\ell \ge 1$, in $\Map_\infty^{(\ell)}$, we have
\[\Pr{N_\ell \ge \kappa \ell^{2-a}} \ge \delta.\]
\end{lem}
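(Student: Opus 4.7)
We run a filled-in peeling exploration of $\Map_\infty^{(\ell)}$, viewing $N_\ell$ through the hierarchical structure it reveals. The boundary walk of length $2\ell$ of the root face admits a recursive decomposition into a simple-boundary core of some half-perimeter $\ell_0 \le \ell$ joined to finite Boltzmann pieces of half-perimeters $k_1,\dots,k_N$ (with $\ell_0 + \sum k_i = \ell$) by cut edges, each such piece admitting the same decomposition recursively. In this description $N_\ell$ equals the number of cut edges, at every nesting level, whose excursion arc on the boundary walk contains the boundary position of $\rho$; equivalently, the depth of $\rho$ in the resulting cut-edge nesting tree.

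We choose an algorithm $\mathcal{A}$ which, whenever possible, peels an edge of the current hole-boundary that still coincides with an edge of the \emph{original} root-face boundary. The perimeter process $(P_n)_{n \ge 0}$ is then the Doob $h^{\uparrow}$-transform of the $\nu$-walk started at~$\ell$, and the useful events are the $\mathsf{G}_{*,k}$ or $\mathsf{G}_{k,*}$ steps in which both identified edges are original boundary edges; each such event creates one cut edge on the root face. Using $\nu(-k) \sim \mathsf{p}_{\q}\, k^{-a}$ and $h^{\uparrow}(k) \sim 2\sqrt{k}$, the transition probabilities of Section~\ref{sec:peeling} give that at each step the probability of a useful $\mathsf{G}$-event is bounded below uniformly in $\ell$, as long as the perimeter remains of order $\ell$ and the hole boundary still contains many original edges; a scaling argument on the Doob-transformed $\nu$-walk then shows that, with probability bounded below, on the time scale $n = \Theta(\ell^{a-1})$ the perimeter stays of order $\ell$ and $\Omega(\ell^{a-1})$ useful cut edges of the root face are produced.

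To convert this into a lower bound on $N_\ell$, we argue that a positive fraction of these cut edges has $\rho$ on its finite side. By iterating the spatial Markov property of $\Map_\infty^{(\ell)}$ on the successive attached pieces that enclose $\rho$, and by a cyclic-symmetry argument for the position of $\rho$ along the boundary of each piece, the expected proportion of useful cut edges whose finite side contains $\rho$ scales as $\ell^{3-2a}$; combining with the previous count gives
\[
N_\ell \;\ge\; \kappa\, \ell^{(a-1)+(3-2a)} \;=\; \kappa\, \ell^{2-a}
\]
with probability at least~$\delta$, for some $\kappa,\delta>0$.

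The main technical obstacle is the last step: the iteration on the nested sub-pieces enclosing $\rho$ must be coordinated with the peeling algorithm $\mathcal{A}$ so that the spatial Markov property applies at each level, and the bookkeeping of simultaneously tracking (a)~which edges of the current hole-boundary are original, (b)~the perimeter window in which useful events occur, and (c)~the relative position of~$\rho$ on the boundary of the current attached piece, is delicate; handling this requires making use of the explicit Markov kernel of the peeling on $\Map_\infty^{(\ell)}$ recalled in Section~\ref{sec:peeling}, together with the invariances of the Boltzmann law.
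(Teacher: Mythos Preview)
Your argument has a genuine gap and is considerably more convoluted than necessary. The central issue is the last step: you produce $\Omega(\ell^{a-1})$ ``useful'' cut edges on the root face with positive probability, and then assert that the \emph{expected proportion} of them having $\rho$ on the finite side scales like $\ell^{3-2a}$. But an expectation statement of this kind does not by itself yield that $\Omega(\ell^{2-a})$ of them separate $\rho$ from $\infty$ \emph{with probability bounded below}; you would still need a second-moment (or anti-concentration) control on this count, which is precisely the heart of the matter. You essentially acknowledge this when you call the step ``delicate'' and list the bookkeeping difficulties, but the proposal does not indicate how to close the gap. Moreover, the claim that the proportion scales like $\ell^{3-2a}$ is asserted via ``cyclic-symmetry'' and ``iterating the spatial Markov property'' without any concrete computation; it is not clear this heuristic can be made rigorous in the form you describe.

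By contrast, the paper's proof bypasses all of the hierarchical peeling machinery and works directly with $N_\ell$ itself. One labels the boundary edges $1,\dots,2\ell$ and observes that the cut edges counted by $N_\ell$ correspond exactly to pairs $i<j$ (of opposite parity) that get identified in a $\mathsf{G}_{*,\cdot}$ event with the origin on the finite side. The probability of each such identification is explicit from the peeling transition probabilities, and summing gives $\E[N_\ell]\sim c_1\ell^{2-a}$. For the second moment one sums over nested pairs of identifications $(i,l)\supset(j,k)$, again with explicit probabilities, and obtains $\E[N_\ell^2]\sim c_2\ell^{4-2a}$. The Paley--Zygmund inequality then immediately gives $\Pr{N_\ell\ge \tfrac{c_1}{2}\ell^{2-a}}\ge c_1^2/(8c_2)$. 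This is a two-line second-moment argument once the moments are computed; no recursive decomposition, no tracking of the position of $\rho$ along successive sub-boundaries, and no scaling limits of the perimeter process are required.
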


\begin{proof}
Let us label the edges on the boundary of $\Map_\infty^{(\ell)}$ from $1$ to $2\ell$ in clockwise order, starting from the root edge. Note that when peeling one of these edges, it can be identified with another one only if their label have different parity. 
The cut edges counted by $N_\ell$ are given by those pairs $1 \le i < j \le 2\ell$ with different parity such that the edge $i$ is peeled and gets identified with $j$ during an event $ \mathsf{G}_{\cdot,\cdot}$ and so that the infinite part is separated from the origin. From the exact transition probabilities recalled in Section~\ref{sec:peeling} this happens with probability 
\[\Pr{\mathsf{G}_{*,(j-i-1)/2}} =  \frac{1}{2}\frac{h^\uparrow((j-i-1)/2)}{h^\uparrow(\ell)} \nu\left(\frac{j-i-1}{2}-\ell\right).\]
Summing over all possible pairs and splitting according to the parity of $i$, recalling that $\nu(-k) \sim \mathsf{p}_\q k^{-a}$ and $h^\uparrow(k) \sim c' \sqrt{k}$ for some $c' > 0$ we easily find that 
\[\Es{N_{\ell}} \sim c_{1} \ell ^{2-a},\]
for some $c_{1}>0$ as $\ell \to \infty$. 

Let us next turn to the second moment of $N_\ell$. Now we need to consider pairs of identified pairs of edges; note that the identifications must be planar in the sense that for $1 \le i < j < k < l \le 2\ell$, one cannot identify $i$ with $k$ and $j$ with $l$. Moreover, if we want both these identifications to create cut edges which separate the origin from infinity, then one can only identify $i$ with $l$ and $j$ with $k$; this necessitates that $i$ and $l$ have different parity, and also $j$ and $k$.
In this case, the probability to identify $i$ with $l$ and $j$ with $k$ equals the probability of the event $\mathsf{G}_{*,(l-i-1)/2}$ starting with a half-perimeter $\ell$, times the probability of the event $\mathsf{G}_{*,(k-j-1)/2}$ starting with perimeter $l-i-1$, that is explicitly
\begin{align*}
&\frac{1}{2}\frac{h^\uparrow((l-i-1)/2)}{h^\uparrow(\ell)} \nu\left(\frac{l-i-1}{2}-\ell\right)
\cdot \frac{1}{2}\frac{h^\uparrow((k-j-1)/2)}{h^\uparrow(l-i-1)/2} \nu\left(\frac{k-j-1}{2}-\frac{l-i-1}{2}\right)
\\
&= \frac{1}{4}\frac{h^\uparrow((k-j-1)/2)}{h^\uparrow(\ell)} \nu\left(\frac{l-i-1}{2}-\ell\right) \nu\left(\frac{k-j-1}{2}-\frac{l-i-1}{2}\right).
\end{align*}
With the same reasoning, we obtain that $\E[N_\ell^2] \sim c_2 \ell^{4-2a}$ for some $c_2 > 0$.
Appealing to the Paley--Zygmund inequality, we conclude that for every $\ell$ large enough, we have
\[\Pr{N_\ell \ge \frac{c_1}{2} \ell^{2-a}} \ge \frac{c_1^2}{8c_2},\]
and the proof is complete.
\end{proof}

We may now provide Assumption~\ref{H_geom} of Lemma~\ref{lem:conditions_generales_sous_diff}.

\begin{prop}%[Cut-edges]
\label{prop:cut_edges}
Let $a \in (\frac{3}{2}, 2)$. For every $\varepsilon > 0$, there exists $K \ge 1$ such that for every integer $R$ large enough, with probability at least $1-\varepsilon$, there exist at least $R^{4-2a}$ cut edges within distance $K R$ from the origin in $\Map_{\infty}$ which separate from infinity a portion of the map with volume at least $R^{2a-1}$.
\end{prop}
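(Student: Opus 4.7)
The plan is to combine Lemma~\ref{lem:cut_edges_carte_bord} with an exploration of $\Map_\infty$ via peeling-by-layers. For $r \ge 0$, let $\theta_r$ be the first time the hull of the ball of radius $r$ has been entirely revealed by the algorithm $\mathcal{A}_{\mathrm{metric}}$, and let $2\ell_r$ denote the perimeter of the unique hole of the explored submap at that time. By the spatial Markov property, the unrevealed map at time $\theta_r$ is independent of the explored part and distributed as $\Map_\infty^{(\ell_r)}$. Lemma~\ref{lem:cut_edges_carte_bord} applied there produces, with conditional probability at least $\delta$, at least $\kappa \ell_r^{2-a}$ cut edges lying on its root face and separating its origin from infinity. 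Read in $\Map_\infty$, each such cut edge has both endpoints on the boundary of the hull of the ball of radius $r$ (hence at graph distance at most $r$ from $\rho$) and separates from infinity a region which contains the entire hull of the ball of radius $R$ as soon as $r \ge R$; by the volume growth estimates of~\cite{Curien-Marzouk:Markovian_explorations_of_random_planar_maps_are_roundish}, this hull has volume at least $R^{2a-1}$ with probability $1-o(1)$.

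The proposition therefore reduces to showing that with probability close to $1$, we have $\ell_r \ge c R^2$ for some $r \le K R$ and some absolute $c>0$, so that $\kappa \ell_r^{2-a} \ge \kappa c^{2-a} R^{4-2a}$. The expected scaling is indeed $\ell_r \sim r^2$: combining the volume growth $r^{2a-1}$ with the volume-to-perimeter relation $\Es{|\Map^{(L)}|} \sim L^{a-1/2}$ for finite Boltzmann maps recalled in Section~\ref{sec:primal_dense}, one recovers $\ell \sim r^2$. To obtain such a lower bound with positive probability $p>0$, I would proceed by first- and second-moment estimates on $\ell_r$ using the peeling transition probabilities from Section~\ref{sec:peeling}, in the spirit of the Paley--Zygmund argument at the end of the proof of Lemma~\ref{lem:cut_edges_carte_bord}.

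To upgrade the resulting positive probability $p \delta$ to $1-\varepsilon$, I would iterate the construction on geometric scales $r_j = 2^j K_0 R$ for $j = 0, 1, \ldots, n$: by the spatial Markov property applied successively at the stopping times $\theta_{r_j}$, the ``success'' events at the different scales are conditionally independent given the past of the exploration. Choosing $n$ large enough depending on $\varepsilon$ so that $(1 - p\delta)^{n+1} \le \varepsilon$ and setting $K := 2^n K_0$ then gives the claim, since the cut edges produced at scale $r_j \le K R$ satisfy the two required geometric conditions (distance to $\rho$ and separated volume) by the discussion above.

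The hard part will be the perimeter lower bound: in the dense regime, $\ell_r$ experiences heavy-tailed downward jumps during $\mathsf{G}$-events that swallow large boundary segments (driven by the tail~\eqref{eq:tailnu}), so any moment computation has to handle these big drops; a robust workaround, should a direct moment bound on $\ell_{K_0 R}$ fail, is to replace the deterministic stopping time $\theta_{K_0 R}$ by the first $r \le K_0 R$ at which $\ell_r$ exceeds $c R^2$, and argue that this hitting time is finite with positive probability via a scaling-limit or excursion argument on the perimeter process.
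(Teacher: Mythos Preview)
There is a genuine gap at the very first step. When you apply Lemma~\ref{lem:cut_edges_carte_bord} to the unexplored region $\Map_\infty^{(\ell_r)}$ after revealing the hull of radius $r$, the resulting cut edges are cut edges of $\Map_\infty^{(\ell_r)}$ \emph{viewed as a map whose root face is an actual face}. They are \emph{not} cut edges of $\Map_\infty$. Indeed, such a cut edge $c$ is obtained by identifying two edges $i,j$ on the boundary of the hole; removing $c$ from $\Map_\infty^{(\ell_r)}$ separates the boundary cycle into an inner arc and an outer arc, lying in different components. But in $\Map_\infty$ the root face is filled by the hull of radius $r$, a connected planar map with simple boundary glued along the \emph{entire} cycle $b_1,\dots,b_{2\ell_r}$. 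Removing the single boundary edge $c=b_i=b_j$ from the hull does not disconnect it, so the hull's interior connects the inner arc to the outer arc and hence the $\rho$-side to the infinite side. Thus $c$ does not separate $\rho$ from infinity in $\Map_\infty$, and the claim ``each such cut edge \dots\ separates from infinity a region which contains the entire hull'' is false.

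This is exactly why the paper does not apply Lemma~\ref{lem:cut_edges_carte_bord} directly at the hull boundary. Instead it uses the dual peeling and the mechanism from~\cite[\S5.2.1]{Budd-Curien:Geometry_of_infinite_planar_maps_with_high_degrees}: first one discovers a single large face $\mathrm f$ and witnesses a self-identification of two of its edges, producing a \emph{genuine} cut edge $c_1$ of $\Map_\infty$. Only then is Lemma~\ref{lem:cut_edges_carte_bord} applied, to the region beyond $c_1$; at that point the boundary of the remaining hole is an arc of the single face $\mathrm f$, and the already-explored part is attached to this boundary at essentially one vertex (the endpoint of $c_1$), so the new cut edges of $\Map_\infty^{(\ell')}$ remain cut edges in $\Map_\infty$. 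The paper then controls the perimeter via~\cite[Theorem~3.6]{Budd-Curien:Geometry_of_infinite_planar_maps_with_high_degrees} and converts peeling time to primal distance with the roundish estimate of~\cite{Curien-Marzouk:Markovian_explorations_of_random_planar_maps_are_roundish}, bypassing entirely your ``hard part'' of bounding $\ell_r$ for the primal algorithm. Your iteration scheme also needs care (the success event involves $N_{\ell_{r_j}}$, which is not $\mathcal F_{\theta_{r_j}}$-measurable), but that is fixable and secondary to the structural issue above.
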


Note that on the event in the proposition, there are at least $R^{4-2a}$ vertices in $\mathcal{C}_R$ inside the ball of radius $K R$.

\begin{proof}%[Proof of Proposition~\ref{prop:cut_edges}]
In $\Map_{\infty}$, let us perform the peeling $( \e_{n})_{n \ge 0}$ with algorithm $\mathcal{A}_{\mathrm{dual}}$ (although the statement of the proposition deals with primal distances) and recall that we denote by $\theta_{k}$ the first time at which no edge is adjacent to a face at dual distance $k$, that is the time it takes to complete $k$ turns for the peeling by layers on the dual map. Let us write $\mathcal{P}_{k}$ for the half-perimeter of the hole of $\e_{\theta_{k}}$. The results of Section~5.2.1 in~\cite{Budd-Curien:Geometry_of_infinite_planar_maps_with_high_degrees} show that there exists $c>0$ so that for any $k \ge 1$, conditionally on the past before $\theta_{k}$, the following scenario happens with probability at least $c>0$:
\begin{itemize}
\item during the next turn, i.e.~between time $\theta_{k}$ and $\theta_{k+1}$, we discover a large face $ \mathrm{f}$ of degree $\ge 4 \mathcal{P}_{k}$,
\item during the second turn, i.e.~between time $\theta_{k+1}$ and $\theta_{k+2}$, two edges of $ \mathrm{f}$ get identified and create a cut edge separating the origin from infinity and so that the remaining hole has half-perimeter at least $\mathcal{P}_{k}$.
\end{itemize}
By the Markov property applied when discovering such cut edge and by Lemma~\ref{lem:cut_edges_carte_bord}, with probability at least $\delta>0$, this face will further create $\kappa (\mathcal{P}_{k})^{2-a}$ additional cut edges during the completion of the turn, i.e. before time $\theta_{k+2}$. Whence for any $ \varepsilon>0$ we can find $M \ge 0$ so that with probability at least $1- \varepsilon$ we discover $\kappa \left(\inf_{k \le i \le k+M} \mathcal{P}_{i}  \right)^{2-a}$ cut edges within time $\theta_{k}$ and $\theta_{k+M}$. Furthermore $\inf_{i \ge k} \mathcal{P}_{i} \ge \varepsilon \mathcal{P}_{k}$ with probability of order $1- \sqrt{ \varepsilon}$ by the proof of Proposition~A.11 in~\cite{Curien:StFlour}. Let us sum up this discussion: For any $ \varepsilon>0$ there exists $M, \delta >0$ so that independently of the past before $\theta_{k}$, there is a probability at least $1- \varepsilon$ that we discover $\delta (\mathcal{P}_{k})^{2-a}$ edges during the next $M$ turns of the algorithm i.e. before $\theta_{k+M}$. Let us wait for the perimeter process to reach values of order $r^{2}$ and volume at least $r^{2a-1}$, which takes time of order $r^{2(a-1)}$ by~\cite[Theorem~3.6]{Budd-Curien:Geometry_of_infinite_planar_maps_with_high_degrees}, up to further adding $M+1$ turns of the peeling by layers, which takes time of order $r^{2(a-1)}$, we have discovered our desired $ r^{4-2a}$ different $r$-cut edges before $K r^{2(a-1)}$ peeling steps (with algorithm $ \mathcal{A}_{ \mathrm{dual}}$) with probability $1-  \varepsilon$ for some large constant $K \ge 0$. 

We now invoke~\cite{Curien-Marzouk:Markovian_explorations_of_random_planar_maps_are_roundish} which shows   that any Markovian exploration is ``roundish'' and grows roughly like metric balls for the primal metric in $\Map_{\infty}$. In particular, by~\cite[Theorem~1]{Curien-Marzouk:Markovian_explorations_of_random_planar_maps_are_roundish}, there exists $A>0$ such that for any $r$ large enough and for any peeling algorithm, $\e_{K r^{2(a-1)}}$ is contained in the primal ball of radius $A r$ with probability at least $1- \varepsilon$. 
The statement of the proposition follows from this remark combined with the conclusion of the preceding paragraph.
\end{proof}

\begin{rem}\label{rem:recurrence}
Using~\eqref{eq:rootdegreevertex} and the work of Gurel-Gurevich \& Nachmias~\cite{Gurel_Gurevich-Nachmias:Recurrence_of_planar_graph_limits} it follows that, in the whole range $\frac{3}{2} < a \le \frac{5}{2}$, the random walk on $\Map_\infty$ is recurrent, see e.g.~\cite{Bjornberg-Stefansson:Recurrence_of_bipartite_planar_maps,Stephenson:Local_convergence_of_large_critical_multi_type_Galton_Watson_trees_and_applications_to_random_maps}. In the range $\frac{3}{2} < a < 2$ this also follows from the preceding proposition since the effective resistance between the root and the boundary $\partial \hBall(\Map_\infty, KR)$ grows at least as $R^{4-2a}$ (up to a $\log R$ factor for the vertex degrees).
\end{rem}

Let us end this section with the proof of Theorem~\ref{thm:sous_diff_primal} in the dense phase, appealing Lemma~\ref{lem:conditions_generales_sous_diff}.

\begin{proof}[Proof of Theorem~\ref{thm:sous_diff_primal} when $a < 2$]
First, the map $\Map_\infty$ has polynomial growth, of order $R^{2a-1}$~\cite[Proposition~2]{Curien-Marzouk:Markovian_explorations_of_random_planar_maps_are_roundish}, whence Condition~\ref{H_degrees} of Lemma~\ref{lem:conditions_generales_sous_diff} is satisfied with any $d > 2a-1$. Next recall that we have defined $\mathcal{C}_r$ as the set of all the extremities of those cut edges which separate from infinity a part of the map with volume at least $r^{2a-1}$.
According to Proposition~\ref{prop:sommet_racine_cut_edge} there exists $C > 0$ such that $\P(X_n \in \mathcal{C}_r) \le C r^{5-4a} \log r$ for all $n \ge 0$ and $r \ge 2$.
Fix $\varepsilon > 0$; according to Proposition~\ref{prop:cut_edges} and the remark just after, there exists $K \ge 1$ such that for every $r$ large enough, with probability at least $1-\varepsilon/2$ there are at least $r^{4-2a}$ vertices in $\mathcal{C}_r$ inside the ball of radius $K r$, and of course each of them must visited before exiting this ball.

Then Lemma~\ref{lem:conditions_generales_sous_diff} applied with $R = \lceil Kr \rceil$ and $\mathcal{G}_R = \mathcal{C}_r$ shows that for every $R$ large enough, with probability at least $1-\varepsilon$, the random walker $X_i$ stays within distance $R$ from the origin for every 
$i \le (C r^{5-4a} \log r)^{-1} (r^{4-2a})^2 \log^{-7/4} R
\le C' R^{3} \log^{-11/4} R$ for some $C' > 0$.
\end{proof}

\section{Subdiffusivity via horocycles in the dilute phase}
\label{sec:dilue}

We presented informally in Section~\ref{sec:heuristic} a strategy which holds for all $a \in (\frac{3}{2}, \frac{5}{2}]$ based on the representation of $\Map_\infty$ ``from infinity''. As alluded there, in order to avoid the precise construction of this object, only available for the UIPQ/T~\cite{Curien-Menard-Miermont:A_view_from_infinity_of_the_uniform_infinite_planar_quadrangulation, Curien-Menard:The_skeleton_of_the_UIPT_seen_from_infinity} we rely on another approximation of $\Map_\infty$ by finite maps which is due to Budd~\cite{Budd:Cours_peeling_Lyon}, which we next present.

\subsection{Boltzmann maps with an edge as target}

In this section we shall consider finite maps with a root face with degree $2\ell$ and another marked face $\mathsf{f}_{1}$ with degree $2$. One can adapt  in a straightforward way the Boltzmann law to this case and define $\Map_1^{(\ell)}$ such a random map with free volume, see~\cite[Section~4.2]{Curien:StFlour} for details. Using the zipping operation (see Figure~3.2 in~\cite{Curien:StFlour}) those maps will also be seen as maps with a distinguished (non-oriented) edge which we will denote by $\mathbf{a}$ and $ \vec{\mathbf{a}}$ after orienting it in a uniformly random fashion amongst the two possibilities.  

We define filled-in peeling processes $(\e_n)_{n \ge 0}$ of such maps (starting from the root face of degree $2\ell$) in the very same way as in Section~\ref{sec:peeling}, see~\cite{Budd:Cours_peeling_Lyon} or~\cite[Chapter~5]{Curien:StFlour} for details. The only difference here is that, at each step, once the peel edge on the boundary of the hole of $\e_n$, is selected, there are now three possibilities:
\begin{itemize} 
\item Either the peel edge is incident to a new face in $\Map_1^{(\ell)}$, different from the distinguished face $\mathsf{f}_{1}$, and then $ \e_{n+1}$ is obtained from $ \e _{n}$ by gluing this face on the peel edge without performing any other identification;
\item Or the peel edge is incident to another face of $ \e_{n}$ in the map $\Map_1^{(\ell)}$, in which case we first perform the identification of the two boundary edges of $\e_n$ and then fill-in the hole which does not contain the face $\mathsf{f}_{1}$;
\item Or the peel edge is incident to the distinguished face $\mathsf{f}_{1}$ in $\Map_1^{(\ell)}$, then we first add this face and then we fill-in the remaining hole and we stop.
\end{itemize}
As in Section~\ref{sec:peeling}, one can write down the probability of each of these events; an important feature is that if $P_n$ denotes the half-perimeter of the hole of $\e_n$ for every $n \ge 0$, then the process $(P_n)_{n \ge 0}$ is a version of the $\nu$-random walk started from $\ell$ and conditioned to first enter $\Z_{\le 0}$ at the point $-1$ and killed there, where the law $\nu$ is defined in~\eqref{eq:nu}. This conditioning is defined as a Doob $h$-transform with the harmonic function $h_1^\downarrow$ where for $p \geq 1$ we have
\begin{equation}\label{eq:fonction_h_cartes_deux_bords}
h^\downarrow_p(k) = 
%\frac{k}{k+p} h^\downarrow(k) h^\downarrow(p) \qquad\text{for } k \ge0
\frac{k}{k+p} \cdot 2^{-2(k+p)} \binom{2 k}{k} \binom{2p}{p}    \quad\text{for } k \ge0
\qquad\text{and}\qquad
h^\downarrow_p(-p) = 1.
\end{equation}
As in Section~\ref{sec:peeling} one can derive precious information about the lattice $\Map_1^{(\ell)}$ by choosing carefully the peeling algorithm. As an example, the proof of~\cite[Lemma~15.7]{Curien:StFlour} extends easily and shows the analogue of~\eqref{eq:rootdegreevertex} about the degree of the vertex $\rho_{ \vec{ \mathbf{a}}}$ from which $\vec{ \mathbf{a}}$ emanates:
\begin{equation}\label{eq:rootdegreevertex_deux_bords}
\Pr{\deg( \rho_{ \vec{ \mathbf{a}}}) \geq k} \leq \ex^{-ck},
\qquad k \ge 0
\end{equation}
with a constant $c>0$ which does not depend on $\ell$.

We shall observe the map from $ \vec{ \mathbf{a}}$ and  denote by $\vec{\Map}_{1}^{(\ell)}$ the map obtained by forgetting the root edge on the boundary of degree $2 \ell$ and re-rooting the map at $ \vec{ \mathbf{a}}$. The reason why we introduce these random maps is the following result due to Budd~\cite[Theorem~2]{Budd:Cours_peeling_Lyon}. See also~\cite[Theorem~7.1]{Curien:StFlour}.

\begin{prop}[\cite{Budd:Cours_peeling_Lyon}]
\label{prop:convergence_carte_bi_enracinees}
We have $\vec{\Map}_{1}^{(\ell)} \to \Map_{\infty}$ in distribution for the local topology as $\ell \to \infty$.
\end{prop}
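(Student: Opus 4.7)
The plan is to deduce $\vec{\Map}_1^{(\ell)} \to \Map_\infty$ in distribution for the local topology by comparing peeling explorations on both sides. It suffices to establish $B_r(\vec{\Map}_1^{(\ell)}) \to B_r(\Map_\infty)$ in distribution for every fixed $r \ge 1$. To this end I would run the metric peeling algorithm $\mathcal{A}_{\mathrm{metric}}$ of Section~\ref{sec:peeling_layer}, starting in $\vec{\Map}_1^{(\ell)}$ from a 2-gon obtained by splitting $\vec{\mathbf{a}}$, and in $\Map_\infty$ from the standard 2-gon obtained by splitting the root edge. The ball $B_r$ is deterministically revealed at a stopping time $T_r$ of this exploration, and $T_r < \infty$ almost surely in $\Map_\infty$ by one-endedness and local finiteness; it will therefore be enough to prove convergence of the law of the first $n$ peeling steps for every fixed $n \ge 1$, then let $n \to \infty$.

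When peeling $\vec{\Map}_1^{(\ell)}$ from $\vec{\mathbf{a}}$, the unexplored region after any number of steps is a Boltzmann map with two boundaries (the freshly created hole of half-perimeter $p$ and the ``far-away'' boundary of length $2\ell$) carrying the distinguished target face $\mathsf{f}_1$; denoting by $W^{(p,\ell)}$ the total $\q$-weight of such doubly-boundaried maps, the one-step peeling transitions are expressible as ratios of $W^{(p,\ell)}$, times $q_k$ or $W^{(j)}$ factors, in direct analogy with the formulas of Section~\ref{sec:peeling} in which only the single-boundary partition functions $W^{(\ell)}$ enter. In $\Map_\infty$ the corresponding one-step transitions take the same symbolic form but with $W^{(\ell)}$ effectively replaced by $h^\uparrow(\ell) c_\q^{-\ell}$ (using also $\nu(k-1) = q_k c_\q^{k-1}$ from Eq.~\eqref{eq:nu}).

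The heart of the argument is therefore a two-parameter ratio limit
\[
\frac{W^{(p',\ell)}}{W^{(p,\ell)}} \cv[\ell] c_\q^{p-p'} \cdot \frac{h^\uparrow(p')}{h^\uparrow(p)}
\qquad \text{for fixed } p, p' \ge 1,
\]
which extends the strong ratio limit $W^{(\ell+1)}/W^{(\ell)} \to c_\q$ of Section~\ref{sec:nu} to the doubly-boundaried setting. This can be obtained either combinatorially from the generating series of bipartite maps with two boundaries, or probabilistically by reading the peeling of $\Map_1^{(\ell)}$ from the boundary of length $2\ell$ backwards: the half-perimeter process is then the $\nu$-walk from $\ell$ Doob-conditioned (via the harmonic function $h_1^\downarrow$ of Eq.~\eqref{eq:fonction_h_cartes_deux_bords}) to first hit $\Z_{\le 0}$ at $-1$, and its time reversal, viewed near a fixed finite level, converges as $\ell \to \infty$ to the $h^\uparrow$-transformed $\nu$-walk governing the peeling of $\Map_\infty^{(p)}$. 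Either way, this yields convergence of each single-step peeling transition of $\vec{\Map}_1^{(\ell)}$ to the corresponding one of $\Map_\infty$.

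By the spatial Markov property, convergence of one-step transitions upgrades to convergence of the joint law of the first $n$ peeling steps for every fixed $n \ge 1$. The main though standard obstacle is the transition from deterministic $n$ to the random time $T_r$: given $\varepsilon > 0$, choose $n$ such that $\P(T_r \le n \text{ in } \Map_\infty) \ge 1-\varepsilon$, and observe that on the event $\{T_r \le n\}$ the ball $B_r$ is entirely determined by the first $n$ peeling steps; the convergence of the $n$-step laws then transfers to $B_r$, concluding the proof.
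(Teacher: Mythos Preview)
The paper does not actually prove this proposition: it is quoted as a result of Budd~\cite{Budd:Cours_peeling_Lyon} (see also~\cite[Theorem~7.1]{Curien:StFlour}) and used as a black box. So there is no ``paper's own proof'' to compare against.

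That said, your outline is essentially the standard route taken in those references: one peels outward from the distinguished $2$-gon, identifies the one-step transition probabilities as ratios of two-boundary partition functions, and shows these ratios converge to the $h^\uparrow$-transformed transitions governing $\Map_\infty$. The ratio limit you isolate is precisely the crux, and the passage from convergence of $n$-step peeling laws to convergence of balls via a tightness argument on $T_r$ is the correct way to conclude. One small slip: when peeling from $\vec{\mathbf{a}}$, the unexplored region does \emph{not} carry the target face $\mathsf{f}_1$ (that is the $2$-gon you start from); rather it carries the large boundary of degree $2\ell$, which plays the role of the ``target at infinity''. This is only a wording issue and does not affect the argument. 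A point you leave somewhat implicit is that, since the large boundary is simple, the peeling from $\vec{\mathbf{a}}$ could in principle swallow part of it in a $\mathsf{G}$-event; ruling this out for fixed $n$ as $\ell\to\infty$ is exactly where the ratio limit (or equivalently the time-reversal picture you mention) is used, and is automatic once that limit is in hand.
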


We shall use this result in the following context, as depicted in Figure~\ref{fig:horodistances_finies}. Since $ \vec{ \mathbf{a}}$ will play the role of the root edge in $\Map_{1}^{(\ell)}$, we shall use the root face of perimeter $2\ell$ in $\Map_{1}^{(\ell)}$ as playing the role of ``the point at infinity'' in the heuristic discussion in Section~\ref{sec:heuristic}. The conjectural horodistances will simply be replaced by distances to the large boundary $\partial\Map_{1}^{(\ell)}$. 
For our application we shall thus consider the oriented edges $(\vec{E}_{n} : n \ge 0)$ visited by a random walk started from  $\vec{E}_0 = \vec{ \mathbf{a}}$ in $\vec{\Map}_{1}^{(\ell)}$. Since by the zipping operation distinguishing a $2$-gon is the same as distinguishing an edge, it is straightforward that $\vec{\Map}_{1}^{(\ell)}$ is stationary with respect to the random walk i.e. that for every $n \ge 0$, the map obtained from $\vec{\Map}_{1}^{(\ell)}$ by distinguishing $\vec{E}_n$ instead of $\vec{ \mathbf{a}}$ has the same law as $\vec{\Map}_{1}^{(\ell)}$.

\begin{figure}[!ht]\centering
\includegraphics[width=.9\linewidth]{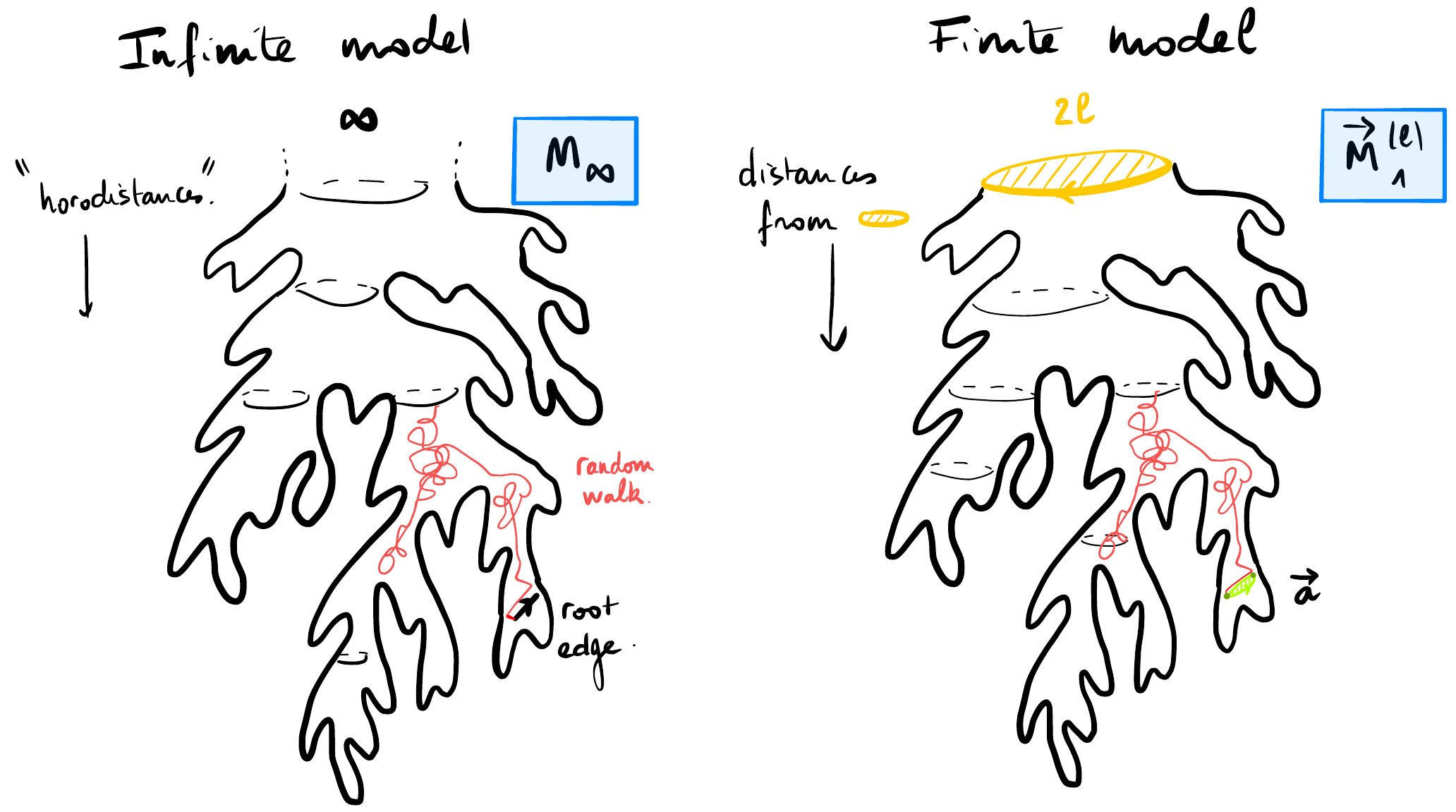}
\caption{Emulating the conjectural horodistances in $\Map_{\infty}$ by distances from a large root face in $\vec{\Map}_{1}^{(\ell)}$.}
\label{fig:horodistances_finies}
\end{figure}

\subsection{Finding a good set}

We will now define a stationary set of ``good edges'' in $\Map_1^{(\ell)}$ by using an exploration with algorithm $ \mathcal{A}_{ \mathrm{metric}}$ recalled in Section~\ref{sec:peeling_layer}. 
We start with the construction in the deterministic setting.\medskip

Fix any finite map $\map^{(\ell)}$ with a boundary face of degree $2 \ell$. We shall measure the distance in $\map^{(\ell)}$ to the boundary $ \partial \map^{(\ell)}$ of degree $2 \ell$. Adapting the algorithm $ \mathcal{A}_{ \mathrm{metric}}$ of Section~\ref{sec:peeling_layer} we shall always peel an edge whose right hand point minimises the distance to $\partial \map^{(\ell)}$. Contrary to the previous cases, we shall not considered the filled-in version of this exploration, and continue our process in each hole thus created: inside each of these holes we peel an edge whose right-hand point minimises the distances (amongst all vertices of that holes) to $\partial \map^{(\ell)}$. We shall freeze the exploration inside a hole as soon as the remaining volume (number of edges) of the map which should fill it in drops below $R^{2a-1}$. Notice that this exploration is not ``Markovian'' since it uses the knowledge of the undiscovered part, but we shall only use it to define our set of good edges. When the exploration is finished, we get a submap $ \mathfrak{e} \subset \map^{(\ell)}$ with holes, and each of these holes hides a map of volume smaller than $ R^{2a-1}$. The set of all edges explored during this process is the set of $R$-good edges.

As in the proof of Lemma~\ref{lem:conditions_generales_sous_diff}, let us consider in $\map^{(\ell)}$  the graph $ \mathcal{H}_{R}$ (for horodistances) spanned by the vertices incident to $R$-good edges and where two vertices are linked by an edge if there exists a path in $\map^{(\ell)}$ going from one to the other without visiting any other vertex of $\mathcal{H}_R$. For $x \in \mathrm{Vertices}(\map^{(\ell)})$ let us also write 
\begin{equation}\label{eq:horodistances_fini}
H(x) = \dgr(x, \partial \map^{(\ell)}),
\end{equation}
the distance from $x$ to the boundary. See Figure~\ref{fig:rgood} Right.

\begin{figure}[!ht]\centering
\includegraphics[width=\linewidth]{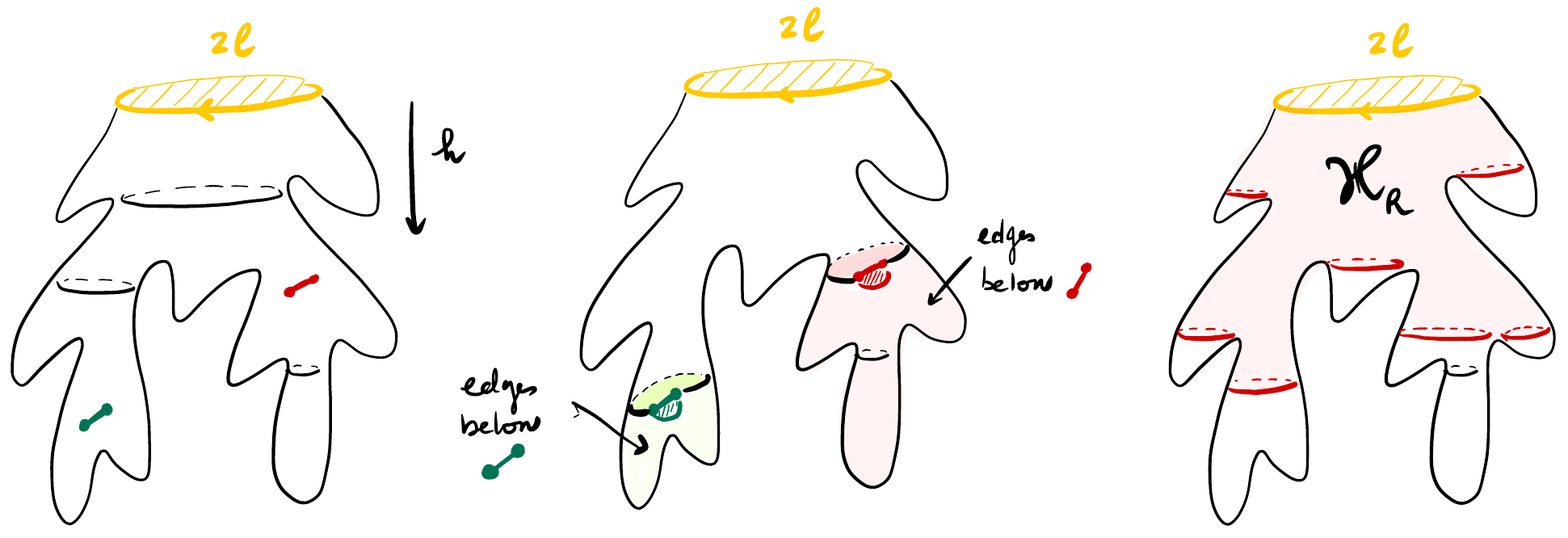}
\caption{Illustration of the construction of $R$-good edges in a map with a boundary as those edges which, when discovered using the filled-in peeling algorithm $ \mathcal{A}_{ \mathrm{metric}}$, possess at least $R^{2a-1}$ edges below them, i.e.~in the remaining hole to be filled-in.}
\label{fig:rgood}
\end{figure}

\begin{prop}\label{prop:distance_horohull}
If $x,y$ are two adjacent vertices in $ \mathcal{H}_{R}$ then $ |H(x)-H(y)| \leq 1$. In words, the graph distances in $ \mathcal{H}_{R}$ are larger than the differences of the distances to the boundary of the map.
\end{prop}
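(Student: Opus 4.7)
The plan is to analyse the structure of an $\mathcal{H}_R$-adjacency and then to exploit the BFS-like nature of the modified algorithm $\mathcal{A}_{\mathrm{metric}}$ to bound the variation of $H$ on the boundary of a frozen hole. Let $\gamma = (x = v_0, v_1, \ldots, v_L = y)$ be a path in $\mathfrak{m}^{(\ell)}$ realising the adjacency, so that $v_i \notin \mathcal{H}_R$ for $1 \le i \le L-1$. If $L = 1$, the bound is immediate from the $1$-Lipschitz property of the graph distance $H$. If $L \ge 2$, each intermediate $v_i$ lies outside $\mathfrak{e}$, hence in the interior of some frozen hole; since the interior of each such hole is a connected region of $\mathfrak{m}^{(\ell)} \setminus \mathfrak{e}$ whose boundary cycle is entirely contained in $\mathcal{H}_R$, the path $\gamma$ cannot pass between two distinct holes' interiors without visiting a vertex of $\mathcal{H}_R$ in between. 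Hence all intermediate vertices of $\gamma$ lie in the interior of a single common frozen hole $\mathcal{O}$, and both $x$ and $y$ lie on the boundary cycle $\partial \mathcal{O}$.

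I would next exploit the BFS structure of the exploration: since $\mathcal{A}_{\mathrm{metric}}$ always peels at an edge whose right endpoint minimises the true distance $H$ to $\partial \mathfrak{m}^{(\ell)}$, the exploration inside $\mathcal{O}$ proceeds in increasing $H$-order. Let $r^* = \min_{v \in \partial \mathcal{O}} H(v)$ denote the current layer at which $\mathcal{O}$ is frozen. A shortest-path argument yields that every vertex in the interior of $\mathcal{O}$ satisfies $H \ge r^* + 1$: any geodesic in $\mathfrak{m}^{(\ell)}$ from $\partial \mathfrak{m}^{(\ell)}$ to such an interior vertex must cross $\partial \mathcal{O}$ at some vertex of $H \ge r^*$ and then take at least one further step. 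Since each boundary vertex of $\mathcal{O}$ has an interior neighbour, the $1$-Lipschitz property yields $H \ge r^*$ throughout $\partial \mathcal{O}$.

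The matching upper bound $H \le r^* + 1$ on $\partial \mathcal{O}$ is obtained by induction on the peeling steps, with the invariant that the $H$-values on the boundary of each currently active hole always span at most two consecutive integers. This is immediate at the start (when $\partial \mathfrak{m}^{(\ell)}$ consists entirely of vertices at $H = 0$) and is preserved by $\mathsf{G}$-events, which only split a hole without modifying boundary values. For a $\mathsf{C}$-event peeling at a vertex of $H = h$ and revealing a new face $f$ of degree $2k$, the previously unseen vertices of $f$ all satisfy $H \ge h + 1$ by the unrevealed-vertex lemma applied just before the step; combined with the BFS order, one argues that the boundary of the ongoing exploration is not enlarged at any layer deeper than $h + 1$. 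Applying the invariant at the moment of freezing gives $\partial \mathcal{O} \subset \{v : H(v) \in \{r^*, r^* + 1\}\}$, and hence $|H(x) - H(y)| \le 1$.

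The main technical obstacle lies in verifying the invariant during a $\mathsf{C}$-event revealing a large face, since vertices of $f$ far from the peeled edge along its boundary arc could, a priori, carry $H$-values strictly greater than $h + 1$. Showing that such ``deep'' vertices either cannot arise on the boundary of the active hole (being topologically isolated by the newly added face arcs) or otherwise contribute only to sub-holes whose own boundaries inherit the invariant requires a careful planar-topological argument using the specific gluing pattern of C-events and the planarity of $\mathfrak{m}^{(\ell)}$, and this constitutes the bulk of the verification.
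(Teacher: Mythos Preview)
Your opening reduction is correct: a length-$1$ adjacency is handled by the $1$-Lipschitz property of $H$, and a longer one forces both $x$ and $y$ onto the boundary $\partial\mathcal{O}$ of a single frozen hole. Your lower bound $H\ge r^*+1$ on the interior of $\mathcal{O}$ is also fine. The genuine gap is the invariant you try to propagate, namely that the $H$-values along the boundary of every active hole lie in two consecutive integers. This is false, not merely hard to verify: after a $\mathsf{C}_k$-event peeled at a vertex of height $h$, the $2k-2$ newly exposed boundary vertices may carry distances to $\partial\mathfrak{m}^{(\ell)}$ as large as roughly $h+k$ (whether one measures in the full map or intrinsically in $\mathfrak{e}$). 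The paper's own Figure~\ref{fig:hole} depicts a hole boundary with labels $a,a+1,a+2,a+3$. Your proposed rescue --- that such deep vertices are ``topologically isolated'' or are pushed into sub-holes whose boundaries obey the invariant --- cannot repair the argument, because the hole produced by that single $\mathsf{C}_k$-event already violates the invariant before any further splitting occurs.

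The paper sidesteps this by working with the intrinsic label $L(v)=d_{\mathfrak{e}}(v,\partial\mathfrak{m}^{(\ell)})$ rather than $H$, and by proving a much weaker structural statement: on each hole boundary $\partial h$ only the \emph{exit vertices} --- those adjacent in $\mathfrak{e}$ to some vertex off $\partial h$ --- are forced to satisfy $L\in\{a,a+1\}$, while non-exit boundary vertices may carry arbitrarily large labels. A second key point, absent from your sketch, is that for exit vertices (and for inner vertices of $\mathfrak{e}$) one has $L=H$. Since the flashed walk can only pass from one hole-region to another, or from a hole to the bulk of $\mathfrak{e}$, via an exit vertex, the variation of $H$ between successive such passages is at most $1$. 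In short, the correct fix is not to sharpen your two-value property but to restrict it from all of $\partial\mathcal{O}$ to its exit vertices, and to supplement it with the identification $L=H$ there.
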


This shows that $\mathcal{H}_{R}$ does not create ``shortcuts'' in the sense that if a path in the map goes from a vertex $x \in \mathcal{H}_{R}$ to another vertex $y \in \mathcal{H}_{R}$ such that $H(y) = H(x) - k$, then the path flashed on $\mathcal{H}_{R}$ goes from $x$ to $y$ in at least $k$ steps. 
In view of applying Lemma~\ref{lem:conditions_generales_sous_diff} with $\mathcal{G}_R = \mathcal{H}_{cR}$ for some $c > 0$, this will provide Assumption~\ref{H_geom} with $\beta_R$ of order $R$.

\begin{proof}
Let us examine the situation after the branching peeling exploration with algorithm $ \mathcal{A}_{ \mathrm{metric}}$ frozen when the volume of the map of a given hole drops below $R^{2a-1}$. The submap $ \mathfrak{e}$ obtained may have several holes, which are simple faces which cannot share any edge but may share vertices with $ \partial \map^{(\ell)}$. Label the vertices of $ \mathfrak{e}$ with respect to their graph distance \emph{within $ \mathfrak{e}$} to the boundary $ \partial \map^{(\ell)}$. The key is to notice that by the properties of algorithm $ \mathcal{A}_{ \mathrm{metric}}$, each hole of $ \mathfrak{e}$ has the following property:
For each hole $h$ of $  \mathfrak{e}$ there exists an integer $ a \geq 0$ such that the vertices which are adjacent to another vertex of $ \mathfrak{e} \setminus h$ carry either label $a$ or $a+1$. See Figure~\ref{fig:hole}.  The vertices of label $a$ or $a+1$ inside a given hole are called \emph{exit vertices} in the following lines.

\begin{figure}[!ht]\centering
\includegraphics[width=10cm]{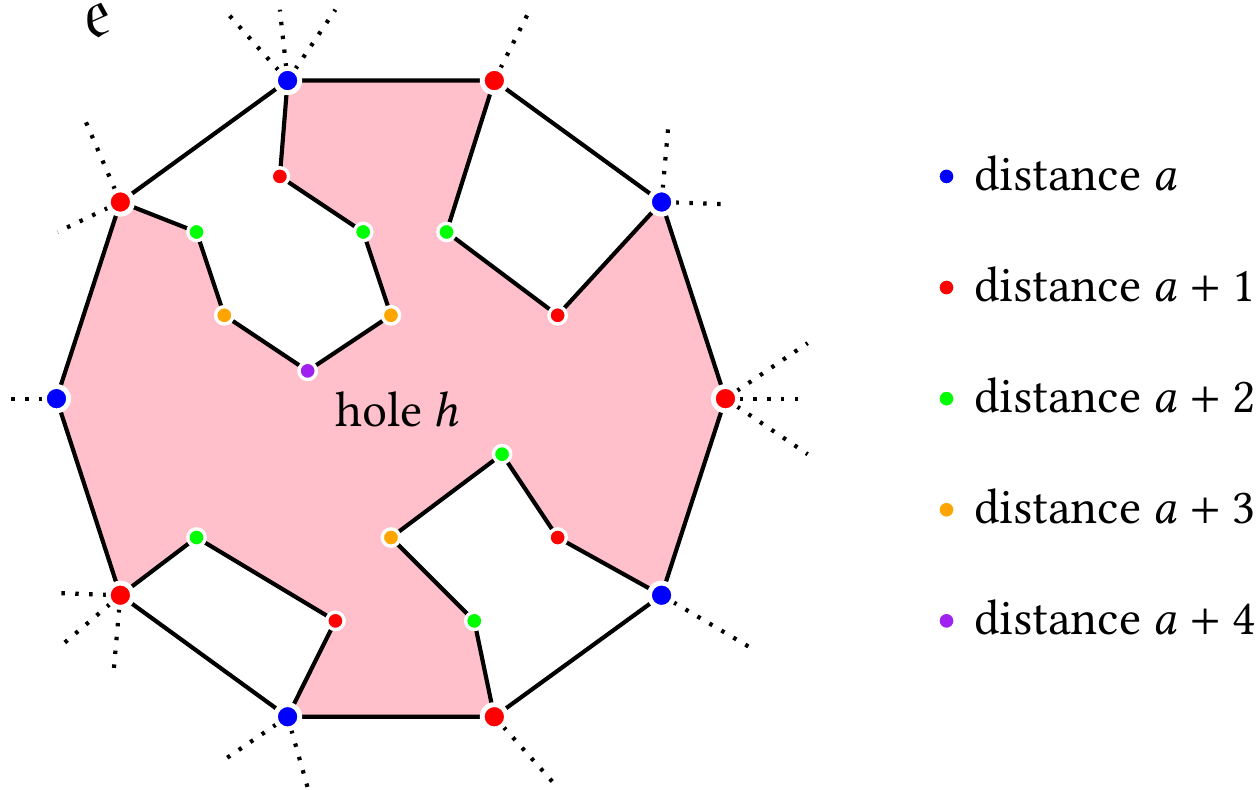}
\caption{Typical status of the distances to the boundary of the map along a given hole. Notice that only the vertices with label $a$ or $a+1$ --the exit vertices-- can be linked to a different part of $ \mathfrak{e}$.}
\label{fig:hole}
\end{figure}

This property is easy to prove by induction using the definition of algorithm $ \mathcal{A}_{ \mathrm{metric}}$ and the peeling transitions. Also if $x \in \mathfrak{e}$ is a inner vertex (i.e. not on a hole) or a vertex of a hole with label $a$ or $a+1$ as above, then the graph distance from $x$ to $ \partial \map^{(\ell)}$, coincide inside $ \mathfrak{e}$ and within the larger map $\map^{(\ell)}$. Let us now consider a walk inside $\map^{(\ell)}$ and let us flash it on $ \mathcal{H}_R$. It should be clear from the above that although the transitions for the flashed walk may be arbitrary inside a given hole, the only way to ``escape'' from a hole and walk inside $ \mathcal{H}_R$ is to go through an exit vertex. Whence, by the above property, between a time where the walk enters a hole (or a inner vertex of $ \mathfrak{e}$) and the time it exits it,  its label (distance to the boundary) cannot vary by more than $1$ in absolute value. This prove our proposition.
\end{proof}

\subsection{\texorpdfstring{Stationarity and density of $ \mathcal{H}_{R}$}{Stationarity and density of H\_R}}

We now study the properties of the graph $ \mathcal{H}_{R}$ in the case when $ \map^{(\ell)}$ is the random map $ \vec{\Map}_{1}^{(\ell)}$. First of all, recall that $ \vec{\Map}^{(\ell)}_{1}$ is stationary, i.e. invariant under the simple random walk started from $X_0 = \rho_{\vec{ \mathbf{a}}}$ the origin of the oriented edge $\vec{ \mathbf{a}}$ of $\Map_{1}^{(\ell)}$ so the probability that $X_n$ belongs to $\mathcal{H}_{R}$ does not depend on $n$. 
Let us study the density of $ \mathcal{H}_{R}$ (Assumption~\ref{H_densite} of Lemma~\ref{lem:conditions_generales_sous_diff}) using the filled-in peeling process under $ \Map_{1}^{(\ell)}$:

\begin{prop}\label{prop:proba_arete_good_cas_dilue}
There exist two constants $c, C > 0$ such that, uniformly for $\ell \ge 1$ and $R \ge 1$, inside $ \Map_{1}^{(\ell)}$ we have
\[\Pr{\mathbf{a} \text{ is }R \text{-good}} \le c R^{-1}
\qquad\text{and}\qquad
\Pr{\rho_{\vec{ \mathbf{a}}} \in \mathcal{H}_R} \le C R^{-1} \log R.\]
\end{prop}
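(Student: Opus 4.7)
The plan is as follows. For the first inequality, the approach is to focus on the branching metric peeling of $\Map_1^{(\ell)}$ restricted to the hole $H^\star$ containing the distinguished $2$-gon $\mathsf{f}_1$. By Budd's construction recalled above, the half-perimeter process $(P_n)_{0 \le n \le \tau}$ of $H^\star$ is a $\nu$-random walk started at $\ell$, Doob-conditioned by $h_1^\downarrow$ to first enter $\Z_{\le 0}$ at the state $-1$ (where it is killed at time $\tau$, corresponding to the discovery of $\mathsf{f}_1$). The residual volume $V_n$ in $H^\star$ is non-increasing in $n$, so $\mathbf{a}$ is $R$-good iff $V_{\tau-1} \ge R^{2a-1}$; by the spatial Markov property, conditionally on $P_{\tau-1} = k$, the law of $V_{\tau-1}$ is that of $|\Map_1^{(k)}|$. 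This yields the decomposition
\[\Pr{\mathbf{a} \text{ is } R\text{-good}} = \sum_{k \ge 1} \Pr{P_{\tau-1} = k} \cdot \Pr{|\Map_1^{(k)}| \ge R^{2a-1}}.\]

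To bound the two factors, I use on the one hand $\E[|\Map_1^{(k)}|] \le C k^{a-1/2}$ (following~\cite[Proposition~10.4]{Curien:StFlour} adapted to the bi-pointed setting), so Markov yields $\Pr{|\Map_1^{(k)}| \ge R^{2a-1}} \le C \min(1, (k/R^2)^{a-1/2})$. On the other hand, the Doob-transform identity gives $\Pr{P_{\tau-1} = k} = \nu(-k-1) \cdot G_{\Z_{>0}}(\ell, k)/h_1^\downarrow(\ell)$, where $G_{\Z_{>0}}$ denotes the Green function of the $\nu$-walk killed on $\Z_{\le 0}$. Using the asymptotics $h_1^\downarrow(\ell) \sim c \ell^{-1/2}$, $\nu(-k) \sim \mathsf{p}_\q k^{-a}$, $h^\uparrow(k), h^\downarrow(k) \sim c'\sqrt k$, and fluctuation theory for the oscillating $\nu$-walk, one aims to establish the uniform-in-$\ell$ tail bound $\Pr{P_{\tau-1} \ge m} \le C/\sqrt m$. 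Splitting the sum at $k = R^2$ and performing a summation by parts then yields $\Pr{\mathbf{a} \text{ is } R\text{-good}} \le c/R$ in all regimes $a \in (\tfrac{3}{2}, \tfrac{5}{2}]$: the contribution $k \ge R^2$ is controlled by $\Pr{P_{\tau-1} \ge R^2} \le C/R$, while the contribution $k < R^2$ is $R^{-(2a-1)} \sum_{k < R^2} \Pr{P_{\tau-1} \ge k} \cdot k^{a-3/2}$ which integrates to $O(R^{-1})$.

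The second inequality follows from the first by a union bound. Indeed, $\rho_{\vec\mathbf{a}} \in \mathcal{H}_R$ iff at least one edge incident to $\rho_{\vec\mathbf{a}}$ is $R$-good. Since $\Map_1^{(\ell)}$ is invariant under re-rooting the marked edge at any other edge incident to $\rho_{\vec\mathbf{a}}$ (conditional on $\rho_{\vec\mathbf{a}} = v$ in a given map, $\mathbf{a}$ is uniformly distributed among the $\deg(v)$ incident edges), a union bound gives $\Pr{\rho_{\vec\mathbf{a}} \in \mathcal{H}_R \mid \deg(\rho_{\vec\mathbf{a}}) = d} \le d \cdot \Pr{\mathbf{a} \text{ is } R\text{-good} \mid \deg(\rho_{\vec\mathbf{a}}) = d}$. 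Splitting at $d = K \log R$, combining with the exponential tail~\eqref{eq:rootdegreevertex_deux_bords} and the first inequality, and choosing $K$ large enough so that $R^{-cK} \le R^{-1}$ concludes.

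The main obstacle is the uniform tail estimate $\Pr{P_{\tau-1} \ge m} \le C/\sqrt m$ independently of $\ell$: the Doob-transform formula and the tail asymptotics of $\nu$ yield an explicit expression, but translating this into a bound uniform in $\ell$ requires careful fluctuation-theoretic control on the Green function $G_{\Z_{>0}}(\ell, k)$ for the oscillating walk, especially in the regime $\ell \gg m$ where the $h_1^\downarrow$-conditioning is strongest.
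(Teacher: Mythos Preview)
Your treatment of the second inequality (union bound over edges incident to $\rho_{\vec{\mathbf a}}$, truncated at degree $K\log R$ via~\eqref{eq:rootdegreevertex_deux_bords}) matches the paper exactly. For the first inequality, however, there are two genuine gaps.

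The decomposition at time $\tau-1$ is delicate because $\tau-1$ is not a stopping time: conditionally on $P_{\tau-1}=k$ you are also conditioning on the next peel revealing $\mathsf{f}_1$, so the spatial Markov property does not give $V_{\tau-1}\sim|\Map_1^{(k)}|$ as you claim (in fact the correct conditional law is essentially that of the \emph{un-targeted} volume $|\Map^{(k)}|$). More concretely, the moment bound $\E[|\Map_1^{(k)}|]\le Ck^{a-1/2}$ is false: by~\cite[Theorem~3.12]{Curien:StFlour}, $|\Map_1^{(k)}|$ has the law of the first-passage time $\zeta_{-k-1}$ of a centred walk in the domain of attraction of an $(a-\tfrac12)$-stable law, so $\zeta_{-1}$ has tail index $1/(a-\tfrac12)<1$ and \emph{infinite mean}. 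Proposition~10.4 of~\cite{Curien:StFlour} is about $\Map^{(k)}$, not $\Map_1^{(k)}$, and does not carry over to the bi-pointed setting. The paper instead uses the direct tail bound $\Pr{|\Map_1^{(k)}|\ge n}\le Ck\,n^{-1/(a-1/2)}$ coming from a one-big-jump principle for sums of positive stable variables.

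The paper also bypasses your ``main obstacle'' entirely. Since $V_n$ is non-increasing, $\{\mathbf a\text{ is }R\text{-good}\}$ forces $V_T\ge R^{2a-1}$ at \emph{every} stopping time $T<\tau$; one then stops at the first entrance of the half-perimeter into each dyadic interval $[2^j,2^{j+1}]$, where the spatial Markov property applies cleanly. This reduces the fluctuation-theoretic input to the tail of $\min_{n<\tau}P_n^{(\ell)}$, for which the Doob-transform identity combined with the closed form of $h_p^\downarrow$ gives $\Pr{\min_{n<\tau}P_n^{(\ell)}\ge m}\le 2/\sqrt m$ by an explicit one-line computation (Lemma~\ref{lem:borne_min_marche_cond_mourir_0}), with no Green-function estimates needed. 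Note this is \emph{not} the bound you aim for: since $P_{\tau-1}\ge\min_n P_n$, the paper's lemma does not imply $\Pr{P_{\tau-1}\ge m}\le C/\sqrt m$, and the latter's validity uniformly in $\ell$ and in $a\in(\tfrac32,\tfrac52]$ is genuinely unclear.
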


\begin{proof}[Proof of Proposition~\ref{prop:proba_arete_good_cas_dilue}]
The second claim follows from the first one as in Proposition~\ref{prop:sommet_racine_cut_edge}, appealing to~\eqref{eq:rootdegreevertex_deux_bords} instead of~\eqref{eq:rootdegreevertex}.
We thus focus on the edge $\mathbf{a}$. To see whether $ \mathbf{a}$ is $R$-good we shall open it and explore the map $\Map_{1}^{(\ell)}$ from the boundary towards the distinguished $2$-gon using a filled-in version $(\e_n)_{n \ge 0}$ of the peeling exploration described in the last section.  At the last exploration step at time $\tau$, we must reveal a $2$-gon corresponding to the edge $ \mathbf{a}$. By definition $ \mathbf{a}$ is $R$-good if the remaining hole is filled-in with a map with at least $R^{2a-1}$ edges (note that this does not depend on the orientation of $ \vec{ \mathbf{a}}$).

This implies that at every preceding time $n < \tau$, the unrevealed map which fits in the hole of $\e_n$ must have volume at least $R^{2a-1}$. By the strong Markov property, if $n$ is a stopping time and the perimeter of the hole of $\e_n$ is, say $2p \ge 2$, then the map that fills-in the hole is independent of the exploration and has the law $\Map_1^{(p)}$. According to~\cite[Theorem~3.12]{Curien:StFlour} the volume of such a map has the following law:
\[\P(|\Map_{1}^{(p)}| = n) = \Pr{\zeta_{-p-1} = n},
\qquad\text{for every } n \ge 1,\]
where $\zeta_{-p-1}$ is the hitting time of $-p-1$ of a random walk with step distribution $\mu(\cdot+1)$ on $\Z_{\ge-1}$ where $\mu$ is defined just before~\cite[Theorem~3.12]{Curien:StFlour}. 
In our case, $\mu$ is centred and belongs to the strict domain of attraction of a stable law with index $a-1/2$ (see Proposition~5.9 in~\cite{Curien:StFlour}), then an application of the cyclic lemma and the local limit theorem shows that $\zeta = \zeta_{-1}$ belongs to the strict domain of attraction of a stable law with index $(a-1/2)^{-1} < 1$ in the sense that there exists a constant $c_a > 0$ such that $\P(\zeta \ge x) \sim c_a x^{- 1/(a-1/2)}$ as $x \to \infty$. Then a \emph{one big jump principle} states that there exists a constant $C > 0$ such that for all $n, p \ge 1$ we have
\[\Pr{\zeta_{-p} \ge n} \le C \cdot p \cdot n^{-1/(a-1/2)},\]
see e.g.~\cite[Theorem~2.2.1]{Borovkov-Borovkov-Borovkova} with $r=1$.

Let us denote by $P^{(\ell)}$ the half-perimeter process of the hole during this exploration (hence $P^{(\ell)}$ starts from $\ell$ and evolves as an $h^{\downarrow}_{1}$-transform of the $\nu$-random walk). By stopping the peeling at the first time this half-perimeter enters an interval of the form $[2^k, 2^{k+1}]$, we obtain that
\[\P( \mathbf{a} \text{ is }R \text{-good})
\le \P\left(\min_{k < \tau} P^{(\ell)}_k \ge R^2\right)
+ \sum_{k < \log_2(R^2)} \P\left(\min_{k < \tau} P^{(\ell)}_k \ge 2^k\right) \sup_{j \in [2^k, 2^{k+1}]}  \P(|\Map^{(j)}_{1}| \ge R^{2a-1}).\]
The tail probability of $\min_{k < \tau} P^{(\ell)}_k$ is bounded in Lemma~\ref{lem:borne_min_marche_cond_mourir_0} below and that of the volume of $|\Map^{(j)}_{1}|$ has just been discussed; we obtain that for some $C > 0$,
\begin{align*}
\P(\mathbf{a} \text{ is }R \text{-good})
&\le \frac{2}{R} + \sum_{k < \log_2(R^2)} \frac{2}{2^{k/2}} \sup_{\ell \in [2^k, 2^{k+1}]} C \cdot \ell \cdot (R^{2a-1})^{-1/(a-1/2)}
\\
&\le \frac{2}{R} + 4C \sum_{k < \log_2(R^2)} 2^{k/2} R^{-2},
\end{align*}
which indeed is bounded by some constant times $R^{-1}$.
\end{proof}

In the course of the proof, we used the following lemma.

\begin{lem}\label{lem:borne_min_marche_cond_mourir_0}
For every integers $\ell \ge m \ge 1$, we have that
\[\Pr{\min_{k < \tau} P^{(\ell)}_k \ge m} \le \frac{2}{\sqrt{m}}.\]
\end{lem}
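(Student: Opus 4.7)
The plan is to express the probability as an explicit ratio of the harmonic functions $h^{\downarrow}_{\cdot}$ from~\eqref{eq:fonction_h_cartes_deux_bords} and then bound that ratio by means of the closed form. Writing $(S_k)_{k \ge 0}$ for an unconditioned $\nu$-random walk starting from $\ell$ (under a law $\P^{\nu}_\ell$) and $\zeta := \inf\{k \ge 1 : S_k \le 0\}$ for its first entry into $\Z_{\le 0}$, the Doob $h$-transform identity defining $(P^{(\ell)}_k)$ asserts that for any event $B$ measurable with respect to $(S_0, \ldots, S_{\zeta - 1})$,
\[\P\bigl(B \text{ holds for } (P^{(\ell)}_k)_{0 \le k < \tau}\bigr) = \frac{\P^{\nu}_{\ell}\bigl(B \cap \{S_{\zeta} = -1\}\bigr)}{h^{\downarrow}_1(\ell)}.\]
Applied to $B = \{\min_{1 \le k < \zeta} S_k \ge m\}$, the task reduces to computing $\P^{\nu}_\ell\bigl(\min_{1 \le k < \zeta} S_k \ge m,\, S_{\zeta} = -1\bigr)$.

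Next I would evaluate this numerator by a shift: set $S'_k := S_k - (m-1)$, which is again a $\nu$-walk, now started from $\ell - m + 1 \ge 1$, and introduce $\zeta' := \inf\{k \ge 1 : S'_k \le 0\}$. On the event of interest one has $S'_k \ge 1$ for every $1 \le k < \zeta$ and $S'_{\zeta} = -m \le 0$, and a short verification then yields $\zeta = \zeta'$. By the very definition of $h^{\downarrow}_{m}$, the numerator therefore equals $\P^{\nu}_{\ell - m + 1}(S'_{\zeta'} = -m) = h^{\downarrow}_{m}(\ell - m + 1)$, so that
\[\P\!\left(\min_{k < \tau} P^{(\ell)}_k \ge m\right) = \frac{h^{\downarrow}_{m}(\ell - m + 1)}{h^{\downarrow}_1(\ell)}.\]

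Finally I would substitute the closed form~\eqref{eq:fonction_h_cartes_deux_bords}: the common factor $2^{-2(\ell + 1)}$ cancels, and the ratio becomes
\[\frac{(\ell - m + 1)\,\binom{2(\ell - m + 1)}{\ell - m + 1}\binom{2m}{m}}{2\ell \binom{2\ell}{\ell}}.\]
The remaining step is purely analytic: using the classical inequalities $\frac{4^n}{2\sqrt{n}} \le \binom{2n}{n} \le \frac{4^n}{\sqrt{\pi n}}$ for $n \ge 1$ (the lower bound following from the monotonicity of $n \mapsto \binom{2n}{n}\sqrt{n}\, 4^{-n}$, whose value at $n = 1$ is $\tfrac{1}{2}$), a direct algebraic simplification majorises the above ratio by $\tfrac{4}{\pi \sqrt{m}} \le \tfrac{2}{\sqrt{m}}$, which is the desired bound. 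There is no serious obstacle in this plan; the only point requiring care is the identification $\zeta = \zeta'$ in the shift step, but this is a few lines.
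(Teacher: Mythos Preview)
Your proposal is correct and follows essentially the same route as the paper: both arguments use the Doob $h$-transform together with a spatial shift by $m-1$ to identify the probability with $h^{\downarrow}_m(\ell-m+1)/h^{\downarrow}_1(\ell)$, and then bound this explicit ratio. The only cosmetic difference lies in the final step: the paper rewrites the ratio as $\tfrac{h^{\uparrow}(\ell-m+1)}{h^{\uparrow}(\ell)}\cdot\tfrac{h^{\uparrow}(m)}{h^{\uparrow}(1)}\cdot\tfrac{1}{m}$ and uses the monotonicity of $h^{\uparrow}$ together with $\sqrt{k}\le h^{\uparrow}(k)\le 2\sqrt{k}$, whereas you substitute the closed form and invoke the standard bounds $\tfrac{4^n}{2\sqrt{n}}\le\binom{2n}{n}\le\tfrac{4^n}{\sqrt{\pi n}}$; these are equivalent manipulations.
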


\begin{proof} Let $(S_k)_{k \ge 0}$ be a random walk with i.i.d.~increments of law $\nu$ which, under $\P^{(m_{0})}$ starts from $m_{0}$. By definition of the $h$-transform, we have that
\begin{align*}
\Pr{\min_{k < \tau} P^{(\ell)}_k \ge m}
&= \frac{h_1^\downarrow(-1)}{h_1^\downarrow(\ell)} \cdot \P^{(\ell)}\left(\min_{k < \tau} S_k \ge m \text{ and } \tau_{-1} = \tau < \infty\right)
\\
&= \frac{1}{h_1^\downarrow(\ell)} \cdot \P^{(\ell-m+1)}\left(\min_{k < \tau} S_k \ge 1 \text{ and } \tau_{-m} = \tau < \infty\right)
\\
&= \frac{1}{h_1^\downarrow(\ell)} \cdot h_{m}^\downarrow(\ell-m+1),
\end{align*}
where the last line follows from~\cite[Proposition~5.3]{Curien:StFlour} and we recall the functions $h_p^\downarrow$ from~\eqref{eq:fonction_h_cartes_deux_bords}; recall also $h^\uparrow$ from~\eqref{eq:h_up}. We may re-write this as
\[\Pr{\min_{k < \tau} P^{(\ell)}_k \ge m}
= \frac{h^\uparrow(\ell-m+1)}{h^\uparrow(\ell)} \cdot \frac{h^\uparrow(m)}{h^\uparrow(1)} \cdot \frac{1}{m}.\]
Using that $h^\uparrow$ is increasing, the first ratio is bounded by $1$, the claim then follows from the easy bounds $\sqrt{k} \le h^\uparrow(k) \le 2\sqrt{k}$ for every $k \ge 1$.
\end{proof}

\subsection{Proof of Theorem~\ref{thm:sous_diff_primal} in the dilute case}

Let us end this paper with the proof of Theorem~\ref{thm:sous_diff_primal} in all regimes $3/2 < a \le 5/2$; we illustrate the argument in Figure~\ref{fig:proof_dilute}.

\begin{proof}[Proof of Theorem~\ref{thm:sous_diff_primal}] Let us write $ \mathcal{B}_R$ for the ball of radius $R$ in $ \Map_\infty$ and by $\tilde{ \mathcal{B}}^{(\ell)}_R$ the ball of radius $R$ around $\rho_{ \vec{ \mathbf{a}}}$ in $\vec{\Map}_1^{(\ell)}$. Fix $\varepsilon > 0$. We aim at showing that, on $\Map_\infty$, with probability at least $1 - \varepsilon$, for some large $K \geq 1$, when $R$ is large enough, after $R^3 \log^{-11/4} R$ steps, the random walk has not escaped from $ \mathcal{B}_{KR}$ with probability at least $1- \varepsilon$. The constant $K$ will be chosen below but notice already that for every $K,R \geq 1$, by Proposition~\ref{prop:convergence_carte_bi_enracinees}, we can chose $\ell \equiv \ell( K\cdot R)$ and couple the realisations of $\Map_\infty$ and $\Map^{(\ell)}_1$ in such a way that $ \mathcal{B}_{KR}$ coincides with $ \tilde{ \mathcal{B}}_{KR}^{(\ell)}$ with probability at least $ 1-\varepsilon/10$. Of course on this event, we can further suppose that the random walkers in both graphs coincide up to their first exit time of those balls. This coupling enables us to transfert properties from $\Map_\infty$ to $\Map^{(\ell)}_1$ and to use the random stationary set $ \mathcal{H}_R$ in the latter. 

In particular, by~\cite[Proposition~2]{Curien-Marzouk:Markovian_explorations_of_random_planar_maps_are_roundish} we know that $\Map_\infty$ has polynomial growth of order $R^{2a-1}$ and so we can chose $K \geq C\geq 1$ so that for all $R$ sufficiently large, the volume (in terms of number of edges) of the balls satisfies $|\mathcal{B}_{CR}| \geq R^{2a-1}$ and $|\mathcal{B}_{KR}| \leq R^{10}$ with probability $ 1-\varepsilon/10$. Similarly, up to further increasing $K$, the results of~\cite{Curien-Marzouk:Markovian_explorations_of_random_planar_maps_are_roundish} show that at the first exit time of $ \mathcal{B}_{KR}$, the trace of the random walk has already separated $ \mathcal{B}_{2CR}$ from from $\infty$ with probability at least $1- \varepsilon/10$. By the above coupling, we can chose $ \ell$ large enough so that the above properties holds in $ \vec{\Map}_1^{(\ell)}$ with probability at least $1- 3 \varepsilon/10$. 

Let us now work in $ \vec{\Map}_1^{(\ell)}$. When the above conditions are satisfied, we know that when exiting $ \tilde{ \mathcal{B}}_{KC}^{(\ell)}$ the random walk has entirely surrounded $ \tilde{ \mathcal{B}}_{2CR}^{(\ell)}$, and so it must have visited a vertex $x$ with $H(x) \leq H(\rho_{ \vec{a}}) - 2CR$. Since $|\mathcal{B}_{CR}| \geq R^{2a-1}$, by Proposition~\ref{prop:distance_horohull} this implies that the random walker must have travelled for a distance at least $CR$  through  $ \mathcal{H}_R$ before exiting $ \tilde{ \mathcal{B}}_{KC}^{(\ell)}$.
Using the above volume estimates together with Proposition~\ref{prop:proba_arete_good_cas_dilue}, Lemma~\ref{lem:conditions_generales_sous_diff} shows that with probability at least $1-\varepsilon/10$, when $R$ is large, this has necessitated at least $R^3 \log^{-11/4} R$ steps of the random walk.  Using the coupling between $ \mathcal{B}_{KR}$ and $ \tilde{ \mathcal{B}}_{KR}^{(\ell)}$ we deduce that with probability at least $1 - \varepsilon$, after $R^3 \log^{-11/4} R$ steps, the random walk has not yet escaped from $ \mathcal{B}_{KR}$.
\end{proof}

\begin{figure}[!ht]\centering
\includegraphics[width=12cm]{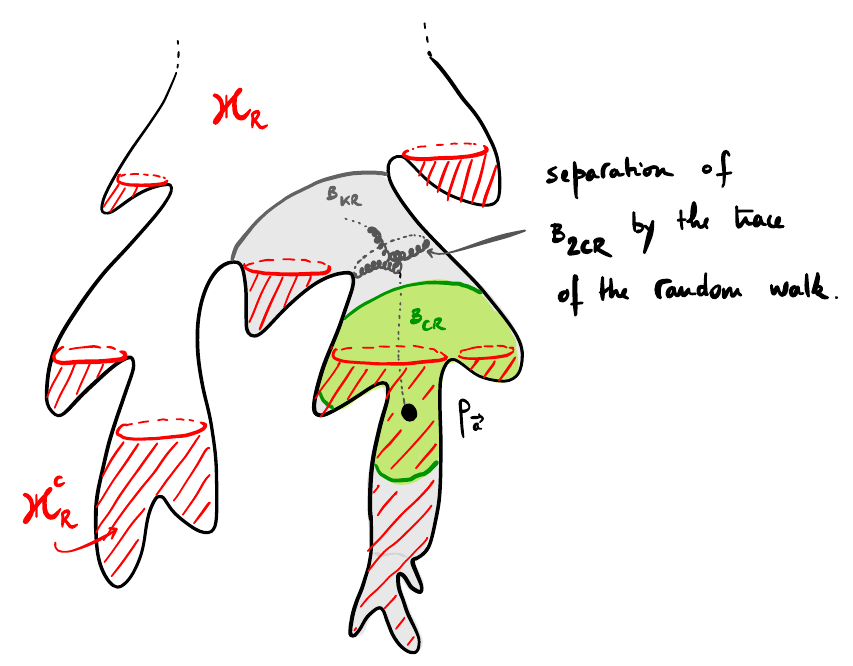}
\caption{Illustration of the proof of Theorem~\ref{thm:sous_diff_primal}. When exiting the ball of radius $KR$, with high probabiliy, the random walk trace must have separated the ball of radius $2CR$ from infinity. If $ | \mathcal{B}_{CR}| \geq R^{2a-1}$ this means that the walker must have travelled for a distance at least $CR$ inside $ \mathcal{H}_R$.}
\label{fig:proof_dilute}
\end{figure}

%%%%%%%%%%%%%%%%%%%%%%%%%%%%%%%%%%%%%%%%%%%%%%%%%%%%%
%%%%%%%%%%%%%%%%%%%%%%%%%% BIBLIOGRAPHIE %%%%%%%%%%%%%%%%%
%%%%%%%%%%%%%%%%%%%%%%%%%%%%%%%%%%%%%%%%%%%%%%%%%%%%%

{%\small
\linespread{1}\selectfont

}


\begin{thebibliography}{10}

\bibitem{Benjamini-Curien:Simple_random_walk_on_the_uniform_infinite_planar_quadrangulation_subdiffusivity_via_pioneer_points}
{\sc Benjamini, I., and Curien, N.}
\newblock Simple random walk on the uniform infinite planar quadrangulation:
  subdiffusivity via pioneer points.
\newblock {\em Geom. Funct. Anal. 23}, 2 (2013), 501--531.

\bibitem{Bjornberg-Stefansson:Recurrence_of_bipartite_planar_maps}
{\sc Bj{\"o}rnberg, J., and Stef{\'a}nsson, S.~{\"O}.}
\newblock Recurrence of bipartite planar maps.
\newblock {\em Electron. J. Probab. 19\/} (2014), no. 31, 40.

\bibitem{Borovkov-Borovkov-Borovkova}
{\sc Borovkov, A.~A., and Borovkov, K.~A.}
\newblock {\em Asymptotic analysis of random walks}, vol.~118 of {\em
  Encyclopedia of Mathematics and its Applications}.
\newblock Cambridge University Press, Cambridge, 2008.
\newblock Heavy-tailed distributions, Translated from the Russian by O. B.
  Borovkova.

\bibitem{Budd:The_peeling_process_of_infinite_Boltzmann_planar_maps}
{\sc Budd, T.}
\newblock The peeling process of infinite {B}oltzmann planar maps.
\newblock {\em Electron. J. Combin. 23}, 1 (2016), Paper 1.28, 37.

\bibitem{Budd:Cours_peeling_Lyon}
{\sc Budd, T.}
\newblock Peeling of random planar maps (lecture notes).
\newblock Available at \url{http://hef.ru.nl/~tbudd/docs/map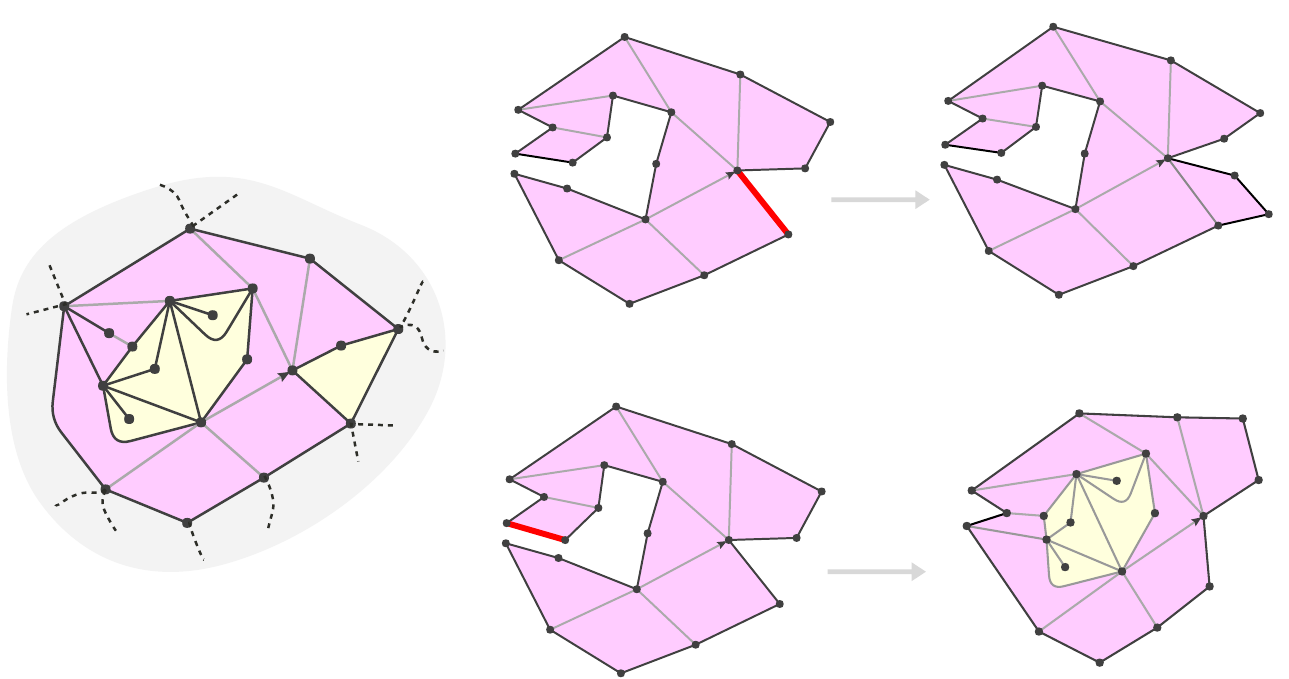}, 2017.

\bibitem{Budd-Curien:Geometry_of_infinite_planar_maps_with_high_degrees}
{\sc Budd, T., and Curien, N.}
\newblock Geometry of infinite planar maps with high degrees.
\newblock {\em Electron. J. Probab. 22\/} (2017), Paper No. 35, 37.

\bibitem{Curien:StFlour}
{\sc Curien, N.}
\newblock {Peeling random planar maps (Saint-Flour lecture notes)}.
\newblock Preliminary version available on the author's webpage, 2019.

\bibitem{Curien-Marzouk:How_fast_planar_maps_get_swallowed_by_a_peeling_process}
{\sc Curien, N., and Marzouk, C.}
\newblock How fast planar maps get swallowed by a peeling process.
\newblock {\em Electron. Commun. Probab. 23\/} (2018), Paper No. 18, 11.

\bibitem{Curien-Marzouk:Markovian_explorations_of_random_planar_maps_are_roundish}
{\sc Curien, N., and Marzouk, C.}
\newblock Markovian explorations of random planar maps are roundish.
\newblock {\em {\em Preprint available at
  \href{http://arxiv.org/abs/1902.10624}{\tt arXiv:1902.10624}}\/} (2019).

\bibitem{Curien-Menard:The_skeleton_of_the_UIPT_seen_from_infinity}
{\sc Curien, N., and M\'enard, L.}
\newblock The skeleton of the {UIPT}, seen from infinity.
\newblock {\em Annales Henri Lebesgue 1\/} (2018), 87--125.

\bibitem{Curien-Menard-Miermont:A_view_from_infinity_of_the_uniform_infinite_planar_quadrangulation}
{\sc Curien, N., M\'enard, L., and Miermont, G.}
\newblock A view from infinity of the uniform infinite planar quadrangulation.
\newblock {\em ALEA Lat. Am. J. Probab. Math. Stat. 10}, 1 (2013), 45--88.

\bibitem{Damron-Hanson-Sosoe:Subdiffusivity_of_random_walk_on_the_2D_invasion_percolation_cluster}
{\sc Damron, M., Hanson, J., and Sosoe, P.}
\newblock Subdiffusivity of random walk on the 2{D} invasion percolation
  cluster.
\newblock {\em Stochastic Process. Appl. 123}, 9 (2013), 3588--3621.

\bibitem{DG76}
{\sc De~Gennes, P.~G.}
\newblock La percolation : un concept unificateur.
\newblock {\em La Recherche 7\/} (1976), 919--927.

\bibitem{Gurel_Gurevich-Nachmias:Recurrence_of_planar_graph_limits}
{\sc Gurel-Gurevich, O., and Nachmias, A.}
\newblock Recurrence of planar graph limits.
\newblock {\em Ann. of Math. (2) 177}, 2 (2013), 761--781.

\bibitem{Gwynne-Hutchcroft:Anomalous_diffusion_of_random_walk_on_random_planar_maps}
{\sc Gwynne, E., and Hutchcroft, T.}
\newblock Anomalous diffusion of random walk on random planar maps.
\newblock {\em {\em Preprint available at
  \href{http://arxiv.org/abs/1807.01512}{\tt arXiv:1807.01512}}\/} (2018).

\bibitem{Gwynne-Miller:Random_walk_on_random_planar_maps_spectral_dimension_resistance_and_displacement}
{\sc {Gwynne}, E., and {Miller}, J.}
\newblock {Random walk on random planar maps: spectral dimension, resistance,
  and displacement}.
\newblock {\em {\em Preprint available at
  \href{http://arxiv.org/abs/1711.00836}{\tt arXiv:1711.00836}}\/} (2017).

\bibitem{Kes86}
{\sc Kesten, H.}
\newblock Subdiffusive behavior of random walk on a random cluster.
\newblock {\em Ann. Inst. H. Poincar\'e Probab. Statist. 22}, 4 (1986),
  425--487.

\bibitem{Kumagai:Random_walks_on_disordered_media_and_their_scaling_limits}
{\sc Kumagai, T.}
\newblock {\em Random walks on disordered media and their scaling limits},
  vol.~2101 of {\em Lecture Notes in Mathematics}.
\newblock Springer, Cham, 2014.
\newblock Lecture notes from the 40th Probability Summer School held in
  Saint-Flour, 2010.

\bibitem{Lee:Conformal_growth_rates_and_spectral_geometry_on_distributional_limits_of_graphs}
{\sc Lee, J.~R.}
\newblock {Conformal growth rates and spectral geometry on distributional
  limits of graphs}.
\newblock {\em {\em Preprint available at
  \href{http://arxiv.org/abs/1701.01598}{\tt arXiv:1701.01598}}\/} (2017).

\bibitem{Lyons-Peres:Probability_on_trees_and_network}
{\sc Lyons, R., and Peres, Y.}
\newblock {\em Probability on trees and networks}, vol.~42 of {\em Cambridge
  Series in Statistical and Probabilistic Mathematics}.
\newblock Cambridge University Press, New York, 2016.

\bibitem{Nachmias:St_Flour}
{\sc Nachmias, A.}
\newblock {Planar Maps, Random Walks and Circle Packing (Saint-Flour lecture
  notes)}.
\newblock {\em {\em Available at \href{http://arxiv.org/abs/1812.11224}{\tt
  arXiv:1812.11224}}\/} (2018).

\bibitem{Stephenson:Local_convergence_of_large_critical_multi_type_Galton_Watson_trees_and_applications_to_random_maps}
{\sc Stephenson, R.}
\newblock Local convergence of large critical multi-type {G}alton-{W}atson
  trees and applications to random maps.
\newblock {\em J. Theoret. Probab. 31}, 1 (2018), 159--205.

\end{thebibliography}
\end{document}